\newtheorem{theorem}{Theorem}[section]
\newtheorem{lemma}[theorem]{Lemma}
\newtheorem{remark}{Remark}
\renewenvironment{proof}[1][Proof]{\begin{trivlist}
\item[\hskip \labelsep {\bfseries #1}]}{\end{trivlist}}
\newcommand{\dd}{d}
\newcommand{\y}{\mathbf{y}}
\newcommand{\z}{\mathbf{z}}
\newcommand{\x}{\mathbf{x}}
\newcommand{\q}{\mathbf{q}}
\newcommand{\uu}{\mathbf{u}}
\newcommand{\vv}{\mathbf{v}}
\newcommand{\gu}{\nabla_{\y}\mathbf{u}}
\newcommand{\RR}{\mathbb{R}}
\newcommand{\Rd}{\mathbb{R}^3}
\newcommand{\Sd}{\mathbb{S}^{2}}
\newcommand{\La}{L^2_\alpha}
\newcommand{\OQ}{\Omega\times Q}
\newcommand{\psit}{\widetilde{\psi}}
\newcommand{\phit}{\widetilde{\phi}}
\newcommand{\psih}{\widehat{\psi}}
\newcommand{\phih}{\widehat{\phi}}
\newcommand{\deriv}[1]{\dfrac{\partial}{\partial #1}}
\newcommand{\gi}{\underline{g}}
\newcommand{\gs}{\overline{g}}
\newcommand{\drift}{\rm{P}_{\eta^\perp}\left(\nabla_{\y} \uu\ \eta \right)}
\title{\scshape \large Fragmentation and monomer lengthening of rod-like polymers, a relevant model for prion proliferation}
\author{\scshape \normalsize Ionel Sorin Ciuperca$^1$, Erwan Hingant$^1$,\\ \scshape \normalsize Liviu Iulian Palade$^2$ and Laurent Pujo-Menjouet$^1$\thanks{ciuperca@math.univ-lyon1.fr, hingant@math.univ-lyon1.fr, liviu-iulian.palade@insa-lyon.fr, pujo@math.univ-lyon1.fr}
}
\date{\normalsize September 29, 2011}
\begin{document}

\maketitle

{\footnotesize  {\itshape
 \centerline{$^1$Universit\'e de Lyon, CNRS UMR 5208, Universit\'e Lyon 1, Institut Camille Jordan}
   \centerline{43, blvd. du 11 novembre 1918, F-69622 Villeurbanne cedex, France}
}}
\medskip
{\footnotesize  {\itshape
 \centerline{$^2$ Universit\'e de Lyon, CNRS UMR 5208, INSA de Lyon, Institut Camille Jordan}
   \centerline{21, avenue Jean Capelle, F-69621 Villeurbanne cedex, France}
}}

\medskip

\begin{center} \textbf{Abstract} \end{center}
\noindent The Greer, Pujo-Menjouet and Webb model [Greer {\it et al.},  J. Theoret. Biol., {\bf 242} (2006), 598--606] for prion dynamics was found to be in good agreement with experimental observations under no-flow conditions.  The objective of this work is to generalize the problem to the framework of general polymerization-fragmentation under flow motion, motivated by the fact that laboratory work often involves prion dynamics under flow conditions in order to observe faster processes. Moreover, understanding and modelling the microstructure influence of macroscopically monitored non-Newtonian behaviour is crucial for sensor design, with the goal to provide practical information about ongoing molecular evolution. This paper's results can then  be considered as one step in the mathematical understanding of such models, namely the proof of positivity and  existence of solutions in suitable functional spaces.
To that purpose, we introduce a new model based on the rigid-rod polymer theory to account for the polymer dynamics under flow conditions. As expected, when applied to the prion problem, in the absence of motion it reduces to that in Greer \emph{et al.} (2006). At the heart of any polymer kinetical theory there is a configurational probability diffusion partial differential equation (PDE) of Fokker-Planck-Smoluchowski type.  The main mathematical result of this paper is the proof of  existence of positive solutions to the aforementioned PDE for a class of flows of practical interest, taking into account the flow induced splitting/lengthening of polymers in general, and prions in particular.\medskip \\
{\footnotesize
\noindent
{\it 2000 Mathematics Subject Classification.} Primary: 35Q92, 82D60; Secondary: 35A05.\\
{\it Key words and phrases.} prion fragmentation-lengthening dynamics, rigid-rod polymer kinetical theory, probability configurational Fokker-Planck-Smoluchowski equation, existence of solutions, protein.  
}


%
%
%
\section{Introduction}
\subsection{Taking space into account for our problem: what is new in biology, what is new in mathematics?}

In 1999, Masel \textit{et al.} \cite{Masel1999} introduced a new model of polymerization in order to quantify
some kinetic parameters of prion replication. This work was based on a deterministic discrete
model developed into an infinite system of ordinary differential equations,
one for each possible fibril length. In 2006, Greer \textit{et al.} in \cite{Greer2006} 
modified this model to create a continuum of possible fibril lengths described 
by a partial differential equation coupled with an ordinary differential equation.
This approach appeared to be ``conceptually more accessible and mathematically more tractable
with only six parameters, each of which having a biological interpretation'' \cite{Greer2006}.
However, based on discussions with biologists, it appeared that these models were not 
well adapted for \textit{in vitro} experiments. In these experiments, proteins
are put in tubes and shaken permanently throughout the experiment
to induce an artificial splitting in order to accelerate the polymerization-fragmentation mechanism.
To the best of our knowledge, dependence of polymer and monomer interaction on the shaking
 orientation and strength, space competition and fluid viscosity had never been taken into account until now.
Thus, it seemed natural to propose a model generalizing the Greer model and adapt it to the 
specific expectations of the biologists. 

We therefore introduce a new model of 
polymer and monomer interacting in a fluid, with the whole system subjected to  motion. 
A large range of \emph{in vitro} experiments involving this protein refers to this protocol
in order to accelerate the polymerization-fragmentation process. Moreover, 
even as our model could be well adapted to other polymer-monomer interaction
studies, we give here a specific application to prion dynamics to make an interesting link
with the previous Masel \emph{et al.} \cite{Masel1999} and Greer \emph{et al.} \cite{Greer2006} models.
On the other hand, due to the complexity of the model, any mathematical
analysis becomes a challenge. We adapt here a technique of semi-discretization in 
time for proving  the main result of existence of positive solutions, we also provide the basis for the numerical approximation of the problem. The mathematical 
novelty of this paper resides in the choice of the \emph{ad hoc} function spaces and the appropiate modification of the existing 
techniques to this new type of problem. Also this work presents an alternative way for proving the existence of positive solutions as compared to the one given by Engler \emph{et al.} in \cite{Engler2006}, Lauren\c{c}ot and Walker in \cite{Laurencot2007} and Simonett and Walker in \cite{Simonett2006}.
It is then useful to those who consider which techniques to use when  proving the existence of positive solutions of this class of equations. 

The objective of this paper is twofold: not only to make a step forward in mathematical modelling of a class of polymer-monomer interaction models, but also to propose,  within a new framework, how to adapt an existing mathematical technique
that will prove the existence of positive solutions to  the problem. 
The biological implications (\emph{e.g.} quantitative and qualitative comparison
with experimental data) of this paper model will be addressed in a subsequent work.
\subsection{The polymer-monomer interaction model: an application to prion dynamics}

Prion proliferation is challenging at both the biological and mathematical levels. Prions are responsible for several diseases such as \emph{bovine spongiform encephalopathy}, \emph{Creutzfeld-Jacob disease}, \emph{Kuru} and it is now commonly accepted that prions are proteins \cite{Prusiner1998}.

For the sake of clarity, we present several fundamental morphological features of prions with relevance to the mathematical modelling of this paper (\textit{i.e.} molecular dynamics of a low enough concentration prion solution).

There are two types of prions: the \emph{Prion Protein Cellular} also called $PrP^C$ and \emph{Prion Protein Scrapie} denoted by $PrP^{Sc}$. It has been proven that $PrP^C$ proteins are naturally synthesized by mammalian cells and consist only of monomers. On the other hand, the infectious  $PrP^{Sc}$ proteins are present only in  pathologically altered cells and exist only in ``polymer''-shape.  The conversion process of a non-pathological into a pathologically modified one consists in attaching the former to an already existing polymer (for details see \emph{e.g.} \cite{Lansbury1995}). As a consequence, the polymers lengthen.  However the sized-up new polymers are fragile, and shorten down their size by splitting whenever the polymer solution is subjected to some flow conditions.  The size lengthening/shortening process takes place continuously, its kinetics being dependent on monomer concentration, flow intensity, polymer size, etc.

Polymers may be seen as string-like molecules \cite{Scheibel2001}.  When polymer proliferation occurs, they do interact to form fibrils;  these latter exhibit  a (physically speaking) more stable structure and appear as rod-like molecules (see figure \ref{fig1}).
\begin{figure}
\begin{center}
\includegraphics[width=4in]{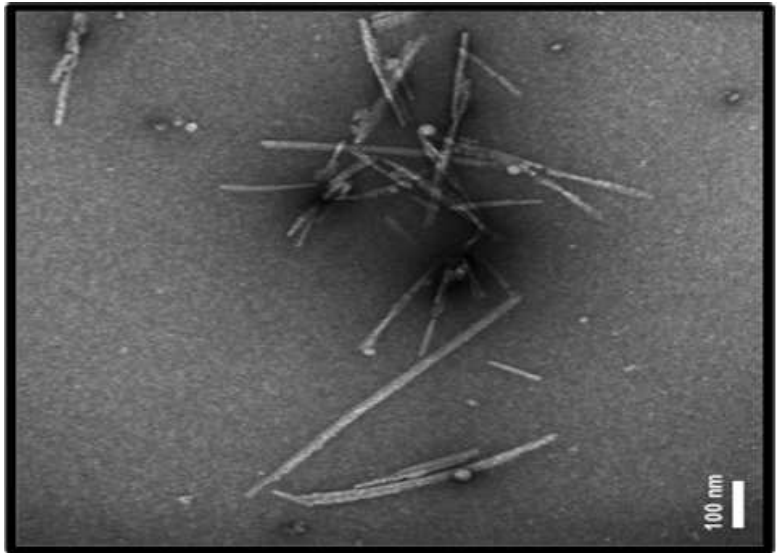}
\caption{View of prion fibrils, Transmission Electron Microscopy image (Courtesy of Prof. J.-P. Liautard, \textsc{inserm} Universit\'e Montpellier 2, France).}
\label{fig1}
\end{center}
\end{figure}
In this paper we deal with idealized rod-like $PrP^{Sc}$, a realistic choice taking into account the flow-related experiments we investigate. We consider the presence of a finite amount of $PrP^C$(free monomers)  and $PrP^{Sc}$ proteins, as well as of ``seeding'' rod-like $PrP^{Sc}$ at initial condition, and fibril lengthening/splitting (\textit{i.e.} fragmentation). It is also important to note that our model is related to \textit{in vitro} experiments: neither source terms of monomers and polymers nor degradation rates are taken into account.

We propose a comprehensive molecular model that accounts for the flow behavior as observed in \emph{in vitro} experiments, focusing on the dynamics of monomers and fibrils. 
A good deal of experimental laboratory work involves complex flows (\textit{e.g}. diffusion, mixing, etc.). Raw data are provided by
sensors designed to acquire macroscopically observable properties like stresses, flow rates, etc. The latter can strongly be influenced by 
the microscopic interactions. Our model does provide an understanding of how various polymers-monomer and polymer-solvent
relationship  result in a configurational probability diffusion equation, with 
the help of which one can investigate the stress tensor and related quantities. 
Therefore, it is of use for flow pattern monitoring sensors.

The current approach is at an early stage of development.  The scission (breakage) process - the most important mechanism in the \emph{in vitro} development/proliferation of infectious proteins - is taken three-dimensionally. While prior models such as those of  \cite{Greer2006,Masel1999} (for mathematically in nature aspects related to, see  \cite{Calvez2009,Engler2006,Greer2007,Laurencot2007,Pruss2006,Scheibel2001,Simonett2006,Walker2007}) neglect the flow influence on prion dynamics, the one in \cite{Greer2006} was rather succesful in predicting prion molecular dynamics in the \emph{in vivo} rest state, and our model is a generalization of \cite{Greer2006}.

The prion fiber is modelled as a rigid rod polymer molecule the length of which is time dependent; see figure \ref{fig2}.

\begin{figure}[htp]
\begin{center}
\includegraphics[width=4in]{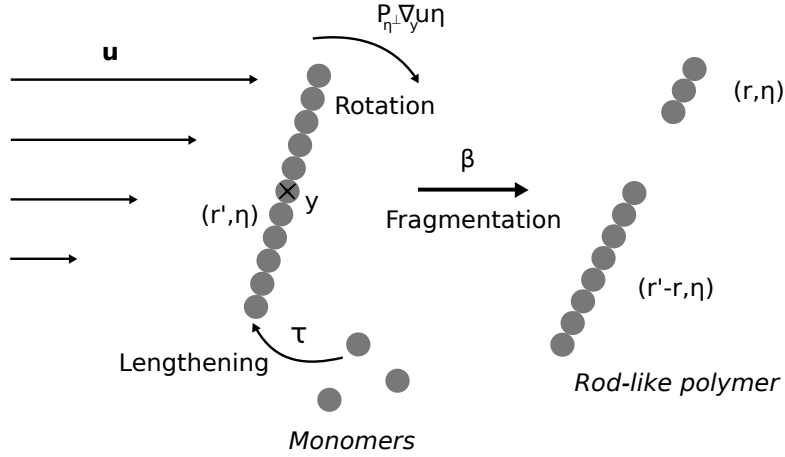}
\caption{Prion fibril modelized as rigid rod polymer under flow.}
\label{fig2}
\end{center}
\end{figure}

The dynamics of rigid rod molecular fluids has been initiated by Kirkwood \cite{Kirkwood1968} and significantly enriched and brought to fruition by Bird and his school \cite{Bird1987} (see also \cite{Huilgol1997} for a more succinct presentation). As in any kinetical theory, the cornerstone is the probability of the configurational diffusion equation, which is of a Fokker-Planck-Smoluchowski type.  The latter is the key ingredient for calculating (the macroscopic) stress tensor and related quantities.  In the following we shall derive a suitable  generalization of equations 14.2-8 in \cite{Bird1987} that account for prion dynamics as observed in experiments \cite{Caughey2009,Prusiner1998,Zomosa-Signoret2007}.

This paper begins by first presenting the constitutive assumptions which later lead to the probability configurational diffusion equation in its general form.  We give a mathematical conceptual framework and a presentation of the main result: the existence of global weak non-negative solution. To achieve this, we obtain a variational formulation of the corresponding boundary value problem, and the proof is based on a semi-discretization in time technique.
The uniqueness of the solution will be proved in a subsequent paper. 
\subsection{The general model}\label{themodel}
%
%
\subsubsection{Polymers}

Let a fiber be modeled as a rod-like molecule here represented by a vector in $\Rd$. For convenience, we use separate symbols for the length $r\in \RR_+=(0,+\infty)$, and for the angle-vector $\eta\in \Sd$, with $\Sd$ being the unit sphere of $\Rd$. Contrary to the  assumption made in \cite{Greer2006} and for simplicity, we assume here that polymers could be arbitrary small, that is no critical (lower) length is considered (this assumption is explained in \cite{Doumic2009}). For technical reasons and without any loss of realistic assumptions, we suppose that fibers are contained in a bounded, smooth open set $\Omega$ in $\Rd$, and the position of each fiber center of mass is denoted by the vector $\y$. We assume a velocity vector field $\uu : \Omega \times \mathbb{R}_+ \rightarrow \mathbb{R}^3$ such that 

\begin{equation}
\nabla_{\y} \cdot \uu = 0 \text{ in } \Omega, \text{ and } \uu\cdot\vec{n} = 0\ \text{on}\ \partial\Omega.
\label{velofield}
\end{equation}
with $\vec n$ the outward normal. The polymer configurational probability distribution function $\psi(r,\eta,\y,t)$, at any time $t>0$, solves the following equation
\begin{equation}
\dfrac{\partial}{\partial t}\psi+ \uu \cdot \nabla_{\y}\psi+ \dfrac{\partial}{\partial r}\left( \tau(\phi,\uu,r,\eta)\psi \right) = \mathcal{B}\psi+\mathcal{F}\psi.
\label{psi}
\end{equation}
with $(r,\eta,\y) \in \RR_+\times \Sd \times \Omega$.
Fibers are transported by the velocity vector field $\uu$ and lengthening occurs at a rate $\tau\geq0$ that depends  on the free monomers density, $\phi$. In dilute regime,  the microscopic hydrodynamics is accounted for by the term $\mathcal{B}$ as in \cite{Otto2008} and defined by
\begin{equation}
[\mathcal{B}\psi](r,\eta,\y,t) = A(r)\ \nabla_\eta \cdot \left[D_1 \nabla_\eta \psi - \drift \ \psi \right],
\end{equation}
where $\nabla_\eta$ and $(\nabla_\eta \cdot)$ denote the gradient and divergence on $\Sd$. $A\geq0$ is a weight function that accounts for the influence of the length increase upon the motion and $D_1>0$ the diffusion coefficient on the sphere. Moreover, the transport on the sphere due to the velocity field is given by $\drift$, with $\mathrm{P}_{\eta^\perp} \z = \z - (\z \cdot \eta)\eta $, for all $\z\in \Rd$, denoting the projection of the vector $\z$ on the tangent space at $\eta$.\\
The fragmentation (scission) process takes place at rate $\beta(\gu,\uu,r,\eta)\geq 0$ and is described by $\mathcal{F}$ following \cite{Greer2006} and
given by
\begin{equation}
[\mathcal{F}\psi](r,\eta,\y,t) = - \beta \psi + 2 \int_r^\infty \beta(\gu,\uu ,r',\eta) \kappa(r,r') \psi (r',\eta,\y,t) \ \dd r'.
\end{equation}

The size redistribution kernel $\kappa$ accounts for the fact that a polymer breaks into smaller fibers. It is symmetric, since a polymer of size $r'$ breaks with equal probability into a fiber of size $r'-r$ and $r$; moreover, the fragmentation/recombination is mass preserving process. We assume here that upon splitting, given the pecularity of the motion process, and its impact on the scission, the resulting clusters of fibrils have the same center of mass as the initial polymer. It seems reasonable to assume that the orientation remains unchanged right after the scission. Therefore: $\kappa(r,r') \geq 0, \; \kappa(r,r')=0 \ \text{if} \ r > r'$, $\kappa(r'-r,r')= \kappa(r,r')$ and
\begin{equation}
\int_0^{r'} \kappa(r,r') \ \dd r = 1.
\label{kappa}
\end{equation}
The probability configurational function $\psi$ must be a non-negative solution, satisfying the non-zero size boundary condition
\begin{equation}
\psi(0,\eta,\y,t) = 0,
\end{equation}
and the initial condition
\begin{equation}
\psi(r,\eta,\y,0) = \psi^0(r,\eta,\y),
\end{equation}
with $\psi^0$ a known non-negative initial probability.
\subsubsection{Monomers}

The concentration of free monomers, given by the distribution $\phi(\y,t)$ at time $t>0$ at any  $\y\in\Omega$, solves
\begin{equation}
\dfrac{\partial}{\partial t}\phi + \uu\cdot\nabla_{\y} \phi - D_2 \Delta \phi =  -  \int_{\Sd\times\RR_+} \tau(\phi,\uu,r,\eta )  \psi(r,\eta,\y,t) \ \dd r \ \dd \eta,
\label{phi}
\end{equation}
with $D_2>0$ the diffusion coefficient. The integral term is due to polymerization of monomers, being transconformed (misfolded), into fibers. Moreover, monomer concentration $\phi$ must be a non-negative solution satisfying the (no transport across) boundary condition
\begin{equation}
\nabla_y \phi \cdot \vec{n} = 0\ \text{on}\ \partial\Omega,
\end{equation}
with $\vec{n}$ the outward normal vector  on the boundary $\partial\Omega$, as well as the initial condition
\begin{equation}
\phi(\y,0) = \phi^0(\y),
\end{equation}
with $\phi^0$ an initially non-negative given concentration.
We adjoin to these equations the balance equation for the total number of monomers contained in the domain $\Omega$:
\begin{equation}
\int_\Omega \left[ \phi(\y,t) + \int_{\RR_+\times\Sd}  r \ \psi(r,\eta,\y,t) \ \dd \eta \ \dd r \right] \ \dd \y =  \rho,\, \mathrm{for\ all}\ t\geq 0,
\label{conservation}
\end{equation}
where $\rho$ is (experimentally) known from the outset.  The above balance equation is formally satisfied, as a consequence of equations \eqref{psi}--\eqref{phi} using also \eqref{velofield}.
%
%
%
%
\subsubsection{Velocity vector field and momentum balance equations}

As an aside, notice the velocity vector field, $\uu(t,\y)\in\Rd$, for all $t>0$ and $\y\in\Omega$, satisfies the Navier-Stokes equations (for incompressible fluids)
\begin{equation}
\left\{\begin{split}
& \deriv{t} \uu + \left(\uu \cdot \nabla\right)\uu  = - \nabla p + \nu \Delta \uu - \nabla \cdot {\bf S},\\
& \nabla \cdot \uu = 0, \\
& \uu \cdot \vec{n} = 0.
\end{split}\right.
\end{equation}
$p$ is the pressure , $\nu$ the viscosity of the Newtonian solvent within which the prions (\emph{i.e.} rigid-rod molecules) are dissolved, and ${\bf S}$ is the non-Newtonian extra stress tensor contribution (to the total stress) due to the presence of rigid rods.  The latter is given by \cite{Bird1987} as
\begin{equation}
{\bf S}(\y,t) = \int_{\RR_+} r^2 \int_{\Sd} \eta  \otimes \eta \ \psi\ \dd \eta  \dd r.
\end{equation}

In this paper, we suppose that $\uu$ is given and the unknown functions are only $\psi$ and $\phi$. The existence and uniqueness of the solutions to the full system with the Navier-Stokes equations introduced above (that is $\uu$, $\psi$ and $\phi$) will be the topic of a subsequent paper.
\subsection{Constitutive assumptions}
\label{gassump}

Assume the velocity vector field satisfies the regularity 
\begin{equation}
\uu\in\mathcal{C}^1\left([0,\infty),W^{1,\infty}(\Omega)\right)
\end{equation}
such that
\begin{equation}
\nabla_{\y} \cdot \uu = 0 \;\;\text{and}\;\;\uu\cdot\vec{n} = 0\ \text{on}\ \partial\Omega.
\label{velocity}
\end{equation}
Next, we adhere to the view on prion proliferation expressed in \cite{Greer2006,Greer2007,Masel1999,Pruss2006}.
The splitting (scission) rate of fibers, given by $\beta$, is assumed to be linear in $r$. Therefore let $g : M_3(\RR)\times\RR^3\times\Sd \rightarrow \RR_+$ be continuous with respect to the first and second variable, such that $\beta(\mathbf{\sigma},\vv,r,\eta) = g(\mathbf{\sigma},\vv,\eta)\ r$, for all $\mathbf{\sigma}\in M_3(\RR)$, $\vv\in\RR^3$, $r>0$ and $\eta\in\Sd$.
Moreover, we assume that for all bounded subsets $B\subset \RR^3$ and $O\subset M_3(\RR)$ there
exist positive constants $\gs_{B,O} \geq \gi_{B,O}$ such that
\begin{equation}
\gi_{B,O} \leq g(\mathbf{\sigma},\vv,\eta) \leq \ \gs_{B,O}, \quad
\mathrm{\;for\;every\;}\ (\mathbf{\sigma},\vv,\eta) \in O\times B \times\Sd.
\end{equation}
Let $T>0$ be fixed.
Then, due to the smoothness of $\uu$, there exists $\gs
\geq \gi > 0$ such that
\begin{equation}
\mathrm{\;for\;every\;}\ (t,\y,\eta)\in [0,T]\times\Omega\times\Sd,\;\;\; \gi \leq g(\gu,\uu,\eta) \leq \gs.
\label{bornu}
\end{equation}
%
%
%
We consider the polymerization rate $\tau$ linear in (the free monomers density) $\phi$,  i.e. there exists $\tau_0 >0$ such that
\begin{equation}
\tau(\phi,\vv,r,\eta) = \tau_0 \phi.
\end{equation}
This assumption had been already evoked by Greer \textit{et al.} \cite{Greer2006} and corresponds to a mass action binding. The splitting kernel $\kappa$ accounts for the probability of a polymer with initial length $r$, to split into a polymer with a shorter length $r'$ as described in \cite{Greer2006}, and is given by
\begin{equation}
\kappa(r,r') =
\begin{cases}
 1/r' & \rm{if}\ 0<r\leq r',\\
 0 & \rm{else.}
\end{cases}
\end{equation}
This expression  is compatible with (\ref{kappa}) (and the conservation law (\ref{conservation})).
Then the length weight function $A\geq 0$ is supposed to be in $L^\infty(\RR_+)$ and there exists $C_A>0$ such that
\begin{equation}
\| A\|_{L^\infty(\Omega)} = C_A <\infty
\label{Anorm}
\end{equation}
We remark that, by virtue of $\uu$ being sufficiently smooth and for fixed $T>0$, there exists $C_P>0$ such that
\begin{equation}
\|\drift \|_{L^\infty([0,T]\times\Omega\times\Sd)} = C_P   < \infty,
\label{driftnorm}
\end{equation}
Using the result stated in the Appendix, there exists $C_D>0$ such that
\begin{equation}
\|\nabla_\eta\cdot\drift\|_{L^\infty([0,T]\times\Omega\times\Sd)} = C_D < \infty.
\label{driftnorm2}
\end{equation}
Thanks to the assumptions given in this section, the problem can be re-written as:
\begin{subequations}
\begin{align} 
\begin{split}
\dfrac{\partial}{\partial t}\psi + \uu \cdot \nabla_{\y}\psi+ \tau_0\phi\dfrac{\partial}{\partial r}\psi - A(r)\ \nabla_\eta \cdot \left[D_1 \nabla_\eta \psi - \drift \ \psi \right]\\
= - g(\gu,\uu,\eta)r \psi + 2 g(\gu,\uu,\eta) \int_r^\infty \psi (r',\eta,\y,t) \ \dd r',
\end{split}\label{psieq}\\
&\dfrac{\partial}{\partial t}\phi + \uu\cdot\nabla_{\y} \phi - D_2 \Delta \phi =  - \tau_0\phi \int_{\Sd\times\RR_+} \psi(r,\eta,\y,t) \ \dd r \ \dd \eta,\label{phieq}\\
&\psi(r=0,\eta,\y,t)=0, \label{boundary1}\\
&\nabla_y \phi\cdot \vec{n}=0, \quad \text{on}\ \partial \Omega \label{boundary2}\\
&\psi(t=0)=\psi^0\;\text{and}\;\phi(t=0)=\phi^0,\label{initial}
\end{align}
\label{system}
\end{subequations}
\subsection{Particular case: zero velocity field, as in the Greer's model}

Consider $\uu = 0$, and assume that $g$ is such that $g(0,\eta) = g_0$, a constant, for any $\eta$.  In fact, even in the absence of flow the prion-fibrils can undergo scission and re-combination.  
Suppose that $\phi$ is independent of $y$,
then let $f(t,r) = \dfrac{1}{|\Omega|}\int_{\Omega\times\Sd} \psi(r,\eta,\y,t)\ \dd \eta
\dd \y$ be the average  of $\psi$.  Integrating  equations (\ref{system}) leads to
\begin{equation}
\left\{
\begin{aligned}
&\deriv{t} f + \tau_0 \phi(t) \deriv{r} f + g_0 r f = 2\gi \int_r^\infty f(r',t)\ \dd r' \;\rm{over}\; (t,r)\in\RR_+^2,\\
& \dfrac{\dd}{\dd t} \phi(t) = - \tau_0 \phi(t) \int_{\RR_+} f(r,t)\ \dd r,\\
&f(0,t) = 0.
\end{aligned}
\right.
\end{equation}
Note that the above system of equations is the one proposed in  \cite{Greer2006} where it was produced under the assumption of prion conservation mass (no  protein synthesis, no metabolic degradation).
\section{Variational formulation and main result}

First we present the functional framework one of the main mathematical novelty of this paper, next  the definition of weak solutions to the system \eqref{system}, and eventually the proof of the existence of a weak solution of this system.
\subsection{Functional framework} \label{functionalframework}

Let $a : \RR_+ \rightarrow \RR_+ $ be defined by $a(r)=\rm{e}^{\alpha r}$ for a $\alpha > 0$. Denote $Q=\Sd\times \RR_+$ and $d\q = a(r) d r d \eta$.  Let the following Hilbert spaces be defined as
\begin{equation}
\La = \left\{ \psi\in L^1_{loc}\left(\OQ \right), \ \int_{\Omega\times Q} \psi^2 \ d\q \dd \y < \infty \right\}.
\end{equation}
Then, 
 \begin{equation}
V = \left\{ \psi\in L^1_{loc}\left(\OQ \right), \ \int_{\OQ} \left(A(r) | \nabla_\eta \psi |^2  + (1+r) \psi^2 \right) \ d\q \dd \y < \infty \right\},
\end{equation}
and
\begin{equation}
\begin{split}V_1 = \bigg\{  \psi\in & L^1_{loc}\left(\OQ \right),\\
&  \int_{\OQ} \left(\left|\deriv{r} \psi\right|^2 + A(r) | \nabla_\eta \psi |^2  + (1+r) \psi^2\right)\ d\q\dd \y < \infty \bigg\}.
\end{split}
\end{equation}
Recall the Sobolev space $H^1(\Omega)$ endowed with the norm
\begin{equation}
\| \phi \|_{H^1} = \| \phi \|_{L^2(\Omega)} + \| \nabla_y \phi \|_{L^2(\Omega)} .
\end{equation}
We also use the canonical embedding
\begin{equation}
V_1 \subset V \subset \La = (\La)^\prime\subset V^\prime \subset \left(V_1\right)^\prime.
\end{equation}
For any $\theta \in \RR $,
let $L^1_\theta = \left\{\psi\in  L^1_{loc}\left(\OQ \right), \; \int_{\OQ} |\psi|\
r^\theta d r d \eta d \y < \infty \right\}$.   Then we have the canonical embedding
\begin{equation}
\La\subset L^1_\theta,\quad \text{for\ any}\; \; \alpha > 0 \;
\text{ and } \; \theta \geq 0,
\end{equation}
which makes sense in regard to the mass conservation and the total quantity of
polymers when $\theta=0$ or $\theta=1$.
\subsection{Variational formulation}

To begin with, we introduce test function spaces.  Let $T>0$.  First, for the polymer $\psi$-equation, let $\mathcal{X}_1$ be the completion of $\mathcal{C}^\infty_c((-T,T)\times\overline{\Omega}\times\Sd\times[0,+\infty))$ with respect to the norm $\|\cdot\|_{\mathcal{X}_1}$
\begin{equation} \|\psit \|_{\mathcal{X}_1} = \int_0^T \left(\left\|\deriv{t}\psit\right\|_{\La}^2 + \|\nabla_{\y} \psit\|^2_{\La} + \|\psit\|_{V_1}^2 \right)\ d t
\end{equation}
In particular, this implies that, if $\psit \in \mathcal{X}_1$, then $\psit(t=T) = 0$.  Second, the
test functions for the $\phi$-equation are elements of $\mathcal{X}_2$, the latter space being the completion of $\mathcal{C}^\infty_c((-T,T)\times\overline{\Omega})$ with respect to the norm $H^1((0,T)\times\Omega)$.  In particular this implies  that if $\phit \in \mathcal{X}_2$, then $\phit(t=T) = 0$.
In order to obtain a variational formulation of (\ref{system}) we first assume that we have a strong solution which is smooth enough. Then we multiply (\ref{psieq}) by $\psit(r,\eta,\y,t) a( r)$, with $\psit\in\mathcal{X}_1$, and integrate over $(0,T)\times \OQ$, next we multiply (\ref{phieq}) by $\phit\in\mathcal{X}_2$ and integrate over $(0,T)\times\Omega$. We note
\begin{equation}
\begin{aligned}
\int_{\RR_+} \tau_0 \phi \deriv{r}\psi\ \psit\ a( r) \dd r  & = - \int_{\RR_+} \tau_0 \phi\psi\ \deriv{r}\left(\psit a( r)\right)\ \dd r,\\
& = - \int_{\RR_+} \tau_0 \phi \psi\ \left(\deriv{r} \psit +  \alpha \psit\right)\ a(r) \dd r, \\
\end{aligned}
\label{vf_derivr}
\end{equation}
since $\psit\in\mathcal{X}_1$.  One also has:
\begin{equation}
\int_{\Sd} \nabla_\eta \cdot \left(D_1 \nabla_\eta \psi\right)\psit\ \dd \eta  = -\int_{\Sd} D_1 \nabla_\eta\psi\cdot\left(\nabla_\eta\psit - 2\eta \psit\right)\ \dd \eta,\\
\end{equation}
and
\begin{equation}
\begin{aligned}
\int_{\Sd} \nabla_\eta \cdot \left( \drift \psi\right) \psit \ d \eta & = - \int_{\Sd}  \drift \psi\cdot \left(\nabla_\eta \psit - 2\eta \psit\right)\ d \eta,\\
& = - \int_{\Sd}  \drift \psi \cdot \nabla_\eta \psit\ d \eta,
\end{aligned}
\end{equation}
since $\drift\cdot \eta =0$ (see for instance Appendix II in
\cite{Otto2008} for calculation details on the sphere). Moreover, by
assumption (\ref{velocity}) on $\uu$,
\begin{equation}
\int_\Omega \left(\uu\cdot\nabla_{\y}\psi\right) \psit\ \dd \y = - \int_\Omega \psi\left(\uu\cdot\nabla_{\y}\psit\right)\ \dd \y,
\end{equation}
and
\begin{equation}
\int_\Omega \left(\uu\cdot\nabla_{\y}\phi\right) \phit\ \dd \y = - \int_\Omega \phi\left(\uu\cdot\nabla_{\y}\phit\right)\ \dd \y.
\end{equation}
Then a variational formulation of (\ref{psieq}) is
\begin{equation}
\begin{aligned}
\begin{split}
& - \int_{\OQ} \psi^0\ \psit(t=0) \ d\q \dd \y  -  \int_0^T\int_{\OQ}\psi \left(\deriv{t}\psit  + \uu \cdot \nabla_{\y} \psit\right)\ d\q\dd \y \ \dd t \\
&+ \int_0^T\int_{\OQ}A(r)\left(D_1 \nabla_\eta\psi \left(\nabla_\eta\psit - 2\eta \psit\right) - \drift\psi \cdot\nabla_\eta\psit\right)\ d\q d\y \ dt\\
&  + \int_0^T\int_{\OQ} \psi \left(g(\gu,\uu,\eta) r  \psit - \tau_0\phi\left(\deriv{r} \psit +  \alpha \psit\right)\right) \ d\q\dd \y  \ \dd t
\end{split}\\
 = 2 \int_0^T\int_{\OQ} g(\gu,\uu ,\eta) \left(\int_r^\infty \psi\ \dd r' \right)\psit \ d\q\dd \y \ \dd t,\\
 \text{for\ any}\ \psit \in \mathcal{X}_1,
\end{aligned}
\label{vfpsi}
\end{equation}
and for (\ref{phieq}),
\begin{equation}
\begin{aligned}
\begin{split}
&-\int_{\Omega}\phi^0\ \phit(t=0)\ \dd\y - \int_0^T\int_{\Omega} \phi \ \left(\deriv{t}\phit + \uu \cdot \nabla_\y \phit\right) \ \dd \y \ \dd t\\
&+ \int_0^T\int_{\Omega} \left[D_2\ \nabla_\y \phi \cdot \nabla_\y \phit  + \tau_0\phi\ \phit \left(\int_{\Sd\times\RR_+} \psi\ \dd r\dd \eta \right)\right]\ \dd \y \ \dd t = 0,
\end{split}\\
\text{for\ any}\ \phit\in \mathcal{X}_2.
\end{aligned}
\label{vfphi}
\end{equation}
\subsection{Main result: existence of non-negative solutions of the problem}

At this point we are prepared to introduce our main result. It gives the existence of non-negative weak solution to our problem under the general assumptions of section \ref{gassump}.
\begin{theorem}[\textbf{Main result}]
Let $\phi^0\in L^\infty(\Omega)$ be non-negative and
$\psi^0 \in \La$ non-negative such that there exists a constant $C_0>0$
with\medskip
\[\psi^0 \leq C_0 e^{-\alpha r}.\medskip\]
Then, for any $T>0$, there exists at least
one solution $(\psi, \phi)$ to the weak formulation
(\ref{vfpsi})-(\ref{vfphi}) of the problem (\ref{system}), with
$\psi$ and $\phi$ non-negative.
Moreover we have $\psi\in L^\infty(0,T;\La)\cap L^2(0,T;V)$ and $\phi\in L^\infty(0,T;L^2(\Omega))\cap L^2(0,T;H^1(\Omega))$.
\bigskip
%
%
\end{theorem}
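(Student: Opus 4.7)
The approach follows the semi-discretization in time strategy announced in the introduction. Partition $[0,T]$ into $N$ subintervals of length $h=T/N$ and, at step $n$, given $(\psi^{n-1},\phi^{n-1})$, construct $(\psi^n,\phi^n)$ in a decoupled manner by solving two linear time-stationary problems. In the $\psi$-equation at step $n$, treat the coupling $\tau_0\phi$ and the flow-dependent coefficients $g(\nabla_{\y}\uu,\uu,\eta)$ and $\drift$ explicitly (at time $t_{n-1}$ or $t_n$), leaving a linear elliptic-type problem for $\psi^n$ in the weighted space $V_1$. Then the $\phi$-equation becomes a linear parabolic step for $\phi^n\in H^1(\Omega)$ with $\psi^n$ as data. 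Existence at each step follows from Lax--Milgram once coercivity of the bilinear form is checked. Having built the discrete trajectory, pass to the limit $h\to 0$ via Aubin--Lions compactness.

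The role of the weight $a(r)=e^{\alpha r}$ and of the spaces $V$ and $V_1$ is decisive for coercivity. Integration by parts in $r$ of the transport term $\tau_0\phi^{n-1}\partial_r\psi^n$, as in \eqref{vf_derivr}, produces a contribution proportional to $-\alpha\tau_0\phi^{n-1}\|\psit\|_{L^2_\alpha}^2$ when one tests the bilinear form against its own argument; this negative term, together with the non-local fragmentation gain $2g\int_r^\infty \psi(r')\,dr'$, is absorbed by the fragmentation sink $g(\cdot)\,r\,\psi^2\,a(r)$ and by the $\eta$-diffusion $A(r)D_1|\nabla_\eta\psi|^2$, precisely because the norm on $V$ carries the $(1+r)$-weight: by Cauchy--Schwarz one has $\bigl(\int_0^\infty\psi\,dr\bigr)^2\leq \bigl(\int_0^\infty \frac{dr}{(1+r)a(r)}\bigr)\cdot \int_0^\infty(1+r)\psi^2\,a(r)\,dr$, and the first factor is finite thanks to $\alpha>0$. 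This gives Lax--Milgram coercivity with a constant depending on $\alpha$, $\gs$, $\tau_0$, $\|\phi^{n-1}\|_\infty$, and the flow bounds \eqref{bornu}--\eqref{driftnorm2}. Non-negativity at each step is obtained by induction: testing the $\psi^n$-equation against $(\psi^n)^{-}a(r)$ and using that $\psi^{n-1}\geq 0$ makes the non-local source non-negative forces $(\psi^n)^{-}\equiv 0$; the same argument with $(\phi^n)^-$ applies to the monomer step.

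Testing the discrete $\psi^n$-equation against $\psi^n$ itself, summing in $n$, and exploiting the bound on the initial data $\psi^0\leq C_0 e^{-\alpha r}$ yields, uniformly in $h$, bounds on the piecewise-constant interpolant $\psi_h$ in $L^\infty(0,T;\La)\cap L^2(0,T;V)$; an analogous procedure gives $\phi_h$ in $L^\infty(0,T;L^2(\Omega))\cap L^2(0,T;H^1(\Omega))$. Combined with bounds on discrete time-increments $(\psi^n-\psi^{n-1})/h$ in $(V_1)'$ and $(\phi^n-\phi^{n-1})/h$ in $(H^1(\Omega))'$, Aubin--Lions yields strong convergence of $\psi_h$ and $\phi_h$ in $L^2(0,T;\La)$ and $L^2(0,T;L^2(\Omega))$, respectively, along a subsequence. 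The limits inherit non-negativity and the claimed regularity by lower semicontinuity. The main obstacle is to pass to the limit in the two genuinely nonlinear couplings: the cross term $\tau_0\phi_h\int_Q\psi_h\,d\eta\,dr$ in \eqref{vfphi} and $\tau_0\phi_h\psi_h(\partial_r\psit+\alpha\psit)$ in \eqref{vfpsi}. For these, the strong $L^2$-convergence of $\phi_h$ combined with weak convergence of $\psi_h$ in $L^2(0,T;V)$ is enough, provided one truncates the $r$-variable using compact support of the test functions to legitimately split the weighted integrals. The non-local fragmentation term, being linear in $\psi$ with coefficients $g(\nabla_{\y}\uu,\uu,\eta)$ depending only on the given smooth flow, passes to the limit via weak convergence alone. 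This last compactness and limit-passage step is where the delicate handling of the $r$-tail (controlled by $\alpha$) and of the boundary value $\psi(r=0)=0$ in the limit demands the most care.
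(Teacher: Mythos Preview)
Your scheme broadly follows the paper's semi-discretization strategy, but two genuine gaps need filling.

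First, your Cauchy--Schwarz control of the nonlocal gain $2g\int_r^\infty\psi\,dr'$ is not enough in the weighted space $L^2_\alpha$: when this term is paired against $\psi(r)\,a(r)$ in the weak formulation, the weight $a(r)=e^{\alpha r}$ cannot be absorbed by the $V$-norm alone (try to close your inequality and you will see an $\int_0^\infty e^{\alpha r/2}|\psi|\,dr$ or a divergent $\int\frac{dr}{1+r}$ appear). The paper's remedy is to treat the gain term \emph{explicitly} (with $\psi^{n-1}$) and to propagate inductively the pointwise bound $0\le\psi^n\le C_n e^{-\alpha r}$ by a Stampacchia-type argument, testing against $[\psi^n_\varepsilon-C_n e^{-\alpha r}]_+$. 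This is the essential role of the hypothesis $\psi^0\le C_0 e^{-\alpha r}$, not merely membership in $L^2_\alpha$. With that bound one gets $\int_r^\infty\psi^{n-1}\,dr'\le(C_{n-1}/\alpha)e^{-\alpha r}$, the exponential weight cancels, and the right-hand side becomes a continuous linear functional on $L^2_\alpha$.

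Second, Lax--Milgram does not apply directly to the $\psi$-step: after moving $\partial_r$ onto the test function as in \eqref{vf_derivr}, the bilinear form is defined on $V\times V_1$, not on a single Hilbert space, and there is no coercivity in the $\partial_r$-direction. The paper inserts a vanishing viscosity $\varepsilon\int_{\OQ}\partial_r\psi\,\partial_r\psih\,d\q\,d\y$ to obtain a coercive form on $V_1\times V_1$, solves for $\psi^n_\varepsilon\in V_1$, and lets $\varepsilon\to 0$ using the $\varepsilon$-uniform $V$-bound; this regularization is also what makes the Stampacchia test function above admissible. Two smaller corrections: Aubin--Lions does \emph{not} give strong $L^2(0,T;L^2_\alpha)$ convergence of $\psi_h$, since $V\hookrightarrow L^2_\alpha$ is not compact ($V$ carries no $\y$-derivative); the paper uses only weak convergence of $\psi_N$ together with strong convergence of $\phi_N$, which, as you correctly note later, suffices for the products. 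Finally, the pure transport $\uu\cdot\nabla_{\y}\psi$ (no $\y$-diffusion) is handled in the paper via characteristics, replacing $\psi^{n-1}(\y)$ by $\psi^{n-1}(z_n(\y))$ in the discrete time difference; you should say how you intend to treat this term.
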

\begin{remark}
Proving the uniqueness of the solution is a rather lengthy undertaking and will be done in a follow up paper.
\end{remark}

\begin{remark}: Weak solutions to the above variational formulation with stronger
regularity than the one implied by the theorem above satisfy the problem
(\ref{system}) in a strong sense.  Moreover, this variational formulation complies weakly with  the mass conservation principle. Therefore, let  $\varphi \in H^1(0,T)$, with
$\varphi(t=T)=0$, and take $\psit(r,\eta,\y,t)=r e^{-\alpha r}
\varphi(t)\in\mathcal{X}_1$ and $\phit (t,\y)=
\varphi(t)\in\mathcal{X}_2$ in the variational formulations. Using the fact that, for any real value function $f$
\begin{equation}
\int_{\Sd} \eta \cdot \nabla_\eta f \ d \eta = 0.
\end{equation}
we obtain
\begin{equation}
\begin{aligned}
 - \varphi(t=0) \int_\Omega &\left[ \phi^0 + \int_{\RR_+\times\Sd}  r \ \psi^0 \ \dd \eta \ \dd r \right] \dd \y \\
&- \int_0^T \dfrac{d}{d t} \varphi(t)\ \int_\Omega \left[ \phi + \int_{\RR_+\times\Sd}  r \ \psi \ \dd \eta \ \dd r \right] \dd \y\ d t  =  0.
\end{aligned}
\end{equation}
If the solution is smooth enough we have then the mass
conservation result
\begin{equation}
\dfrac{d}{d t} \int_\Omega \left[ \phi + \int_{\RR_+\times\Sd}  r \ \psi \ \dd \eta \ \dd r \right] \ \dd \y = 0.
\end{equation}
\end{remark}
\section{Proof of the main result}

The proof consists of three main steps. First (subsection 3.1), a semi-discretization in time of the problem to obtain an approximation of the solution. Second, we get appropriate estimates (subsection 3.2), and third we obtain a solution by passing to the limit (subsection 3.3).
\subsection{Semi-discretization in time}
%
%
%

Let $N>0$ and $\{t_n\}_{n=0}^N$ a subdivision of $[0,T]$ such that $t_0=0$, $t_N=T$ and $t_n-t_{n-1}=\Delta t >0 $. We
denote by $\psi^n$ and $\phi^n$ the approximations of $\psi$ and
$\phi$ at $t_n$.  Denote $\uu^n(\y) = \uu(t_n,\y)$.
First, for any $s\in[0,T]$, consider the following problem on $[0,T]$:
\begin{equation}
\left\{
\begin{split}
& \dfrac{d}{d t}\chi^n(t) = \uu^n(\chi^n(t)),\\
& \chi(s) = \y.
\end{split}
\right.
\end{equation}
We recall that the regularity of $\uu$ is
%
%
$\mathcal{C}^1(0,T;W^{1,\infty})$,
therefore $\uu^n \in W^{1,\infty}(\Omega)$
so that there exists a unique solution $\chi^n$
which will be denoted in the following by
%
%
$\chi^n(t;s,\y)$. The map $\y \rightarrow \chi^n(t;s,\y)$ is a
homeomorphism from $\Omega$ onto $\Omega$, and since $\uu$ is
divergence-free,  we have
\begin{equation}
\det \nabla_y \chi^n(t;s,\cdot) = 1, \;\;\; \rm{a.e.\ in}\ \Omega\times[0,T].
\end{equation}
Define the function
\begin{equation}
x_n : \Omega \rightarrow \Omega, \;\;\; \text{by}\;\;
\quad x_n(\y) = \chi^n(t_n;t_{n-1}, \y).
\end{equation}
This map $x_n$ is invertible. Let us denote $z_n$ as its inverse.
We remark that
\begin{equation}
z_n(\y) = \chi^n(t_{n-1}; t_n, \y).
\end{equation}
Assume now that $ \psi^{n-1}\in V$ and $\phi^{n-1}\in
H^1$ are known.
We consider two problems:
\\
find $\psi^{n}\in V$ such that
\begin{equation}
\begin{aligned}
\begin{split}
& \int_{\OQ} \dfrac{\psi^n(r,\eta,\y) -\psi^{n-1}(r,\eta,z_n(\y))}{\Delta t}\ \psih\  d\q \dd \y\\
&+  \int_{\OQ} A(r)\left( D_1 \nabla_\eta\psi^n\cdot\left(\nabla_\eta\psih -2\eta\psih\right) -\mathrm{P}_{\eta^\perp}\left(\nabla_{\y} \uu^n \eta\right) \psi^n \cdot\nabla_\eta\psih\right) \ d\q \dd \y\\
&+ \int_{\OQ} \psi^n\left(  g(\gu^n,\uu^n,\eta) r \psih - \tau_0\phi^{n-1} \left(\deriv{r} \psih +  \alpha \psih\right) \right) \ d\q \dd \y 
\end{split}\\
= 2 \int_{\OQ} g(\gu^n,\uu^n
  ,\eta) \left(\int_r^\infty \psi^{n-1}\ \dd r' \right)
\psih \ a(r) \dd r \dd \eta\dd \y,
\end{aligned}
\label{dispsi}
\end{equation}
for\ any $\psih\in V_1,$
and find $\phi^n\in H^1$ such that
\begin{equation}
\begin{aligned}
& \int_{\Omega} \left( \dfrac{\phi^n(\y)-\phi^{n-1}(\y)}{\Delta t}\
+ \uu^n \cdot \nabla_y  \phi^n \right) \phih \
\dd \y \ \dd t  \\
&+ \int_{\Omega} \left[ D_2\ \nabla_\y \phi^n \cdot \nabla_\y \phih  +
  \tau_0\phi^n \ \left(\int_{\Sd\times\RR_+}
    \psi^{n-1}\ \dd r\dd \eta \right) \phih \right] \ \dd \y = 0,\\
\end{aligned}
\label{disphi}
\end{equation}
for\ any $\phih\in H^1$.
Problem (\ref{dispsi}) is re-written as
\begin{equation}
a^n(\psi^n,\psih) = l^n_a(\psih),\;\;\text{for\ any}\ \psih\in V_1
\label{vfpsia}
\end{equation}
with
\begin{equation}
a^n = a^{1 n} + a^{2 n}
\end{equation}
where
$a^{1 n}, \; a^{2 n}$  are defined on $V\times V_1$ by
\begin{equation}
\begin{aligned}
a^{1 n}(\varphi_1,\varphi_2)& = \int_{\OQ}  A(r) D_1\nabla_\eta\varphi_1\cdot \left( \nabla_\eta\varphi_2  -2 \eta \varphi_2\right) \ d\q\dd \y \\
& - \int_{\OQ} A( r) \mathrm{P}_{\eta^\perp}\left(\nabla_{\y} \uu^n \eta \right)\varphi_1 \cdot \nabla_\eta\varphi_2 \ d\q d\y\\
& - \tau_0 \int_{\OQ} \phi^{n-1} \varphi_1 \left(\ \deriv{r} \varphi_2+\alpha\  \varphi_2 \right) \ d\q\dd \y\\
& + \int_{\OQ} g(\gu^n,\uu^n,\eta) r \varphi_1 \varphi_2 \ d\q\dd \y
\end{aligned}
\end{equation}
and
\begin{equation}
a^{2 n}(\varphi_1,\varphi_2) = \dfrac{1}{\Delta t}\int_{\OQ}  \varphi_1 \varphi_2\ d\q \dd \y,
\end{equation}
respectively, and $l^n_a$ is defined on $\La$ by
\begin{equation}
\begin{aligned}
l_a^n(\varphi)=&\ 2\int_{\OQ} \ g(\gu^n,\uu^n,\eta) \left(\int_r^\infty \psi^{n-1}\ \dd r' \right)\varphi \ d\q \dd \y \\
&+ \dfrac{1}{\Delta t} \int_{\OQ} \psi^{n-1}\circ z_n\ \varphi\ d\q\dd \y.
\label{lineara}
\end{aligned}
\end{equation}
The problem (\ref{disphi}) is re-written as
\begin{equation}
b^n(\phi^n,\phih) = l^n_b(\phih),\;\;\text{for\ any}\ \phih\in H^1,
\label{vfphib}
\end{equation}
with $b^n$ defined on $H^1\times H^1$ such that
\begin{equation}
\begin{aligned}
b^n(\varphi_1,\varphi_2) =  & \int_\Omega \left( \dfrac{1}{\Delta
    t} \varphi_1\ \varphi_2\ + \left(\uu^n \cdot \nabla_\y
  \varphi_1\right) \varphi_2 + D_2\ \nabla_\y \varphi_1 \cdot \nabla_\y \varphi_2 \right) \dd \y  \\
&  + \int_\Omega \tau_0\varphi_1 \ \varphi_2\ \left(\int_{\Sd\times\RR_+} \psi^{n-1}\ \dd r\dd \eta \right)\ \dd \y,
 \end{aligned}
\end{equation}
and $l^n_b$ defined on $L^2$ by
\begin{equation}
l_b^n(\varphi) = \dfrac{1}{\Delta t}\int_\Omega \phi^{n-1} \varphi \ \dd \y.
\label{linearb}
\end{equation}
\bigskip
%
%
\begin{lemma}
Let  $N\in\mathbb{N}^*$,  $\phi^0\in L^\infty(\Omega), \; \phi^0 \geq 0 $ \ and \ $\psi^0\in \La$
such that\medskip
\[
0 \leq \psi^0 \leq C_0 e^{- \alpha r} \quad \text {a.e in } \; Q
\medskip\]
with $C_0 > 0$ a constant.
\\
Then there exist two sequences \ $ \{\psi^n\}_{n=1}^N \subset
V $ \  and \ $ \{\phi^n\}_{n=1}^N \subset
H^1(\Omega) $ \ satisfying (\ref{vfpsia}) and (\ref{vfphib}).
\\
Moreover, for $\Delta t$ small enough, we have that:\medskip
\begin{subequations}
\begin{align}
& 0\leq \psi^{n} \leq C_{\infty} e^{-\alpha r}, &  \mathit{\;for\;every\;} \,
n \in  \{0, 1, \cdots N\}, \label{psibornn}\\
& 0 \leq \phi^{n}\leq \|\phi^0\|_{L^\infty}, &  \mathit{\;for\;every\;} \,
n \in  \{0, 1, \cdots N\}, \label{phibornn}
\end{align}
\end{subequations}
and
\begin{equation}
\begin{split}
\begin{split}
& \max_{n = 0, \cdots N} \left[\int_{\OQ} |\psi^n|^2 \ d\q d
\y  + D_1 \Delta t \sum_{n=1}^N
\int_{\OQ} A( r)|\nabla_\eta \psi^n|^2 \ d\q d \y  \right. \\
& \left. + 2 \gi \Delta t \sum_{n=1}^N \int_{\OQ} r |\psi^n |^2\ d\q d \y +
\sum_{n=1}^N \int_{\OQ} |\psi^n - \psi^{n-1}\circ z_n|^2\  d\q d \y \right]
\end{split}\\
 \leq 4 e^{k_3 T} \|\psi^0\|_{L_\alpha^2}^2,
\end{split}
\label{estimlem}
\end{equation}
and
\begin{equation}
\begin{split}
\max_{n = 0, \cdots N} \left[\int_{\Omega} |\phi^n|^2 \, d \y + \sum_{n=1}^N
\int_\Omega |\phi^n - \phi^{n-1}|^2 \, d \y + 2 D_2 \Delta t \sum_{n=1}^N
\int_{\Omega}|\nabla_\y \phi^n |^2\ \dd \y  \right]\\
 \leq 2 \|\phi^0\|_{L^2(\Omega)}^2,
\end{split}
\label{estimlem2}
\end{equation}
where in the above we denoted
\begin{equation}\nonumber
\begin{split}
& k_1= \dfrac{2 \gs}{\alpha},\\
& k_2 =  \alpha\tau_0\|\phi^0\|_{L^\infty} + C_D C_A,\\
& C_\infty = 2 C_0 e^{(k_1 + k_2)T},
\end{split}
\end{equation}
and
\begin{equation}
\nonumber
k_3 =  \alpha\tau_0\|\phi^0\|_{L^\infty} + \frac {C_P^2 C_A} {D_1} + 4
{\bar g} \alpha^{-3/2}  C_{\infty} \sqrt{|\Omega| |S_2|}.
\end{equation}
(Recall $C_D$ and $C_A$ are given by equations
(\ref{Anorm}) and (\ref{driftnorm2})).\\
\label{lemme1}
\end{lemma}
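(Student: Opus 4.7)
I would argue by induction on $n$, constructing $(\psi^n,\phi^n)$ from $(\psi^{n-1},\phi^{n-1})$ and propagating the pointwise and energy bounds simultaneously. Observe that the two discrete problems decouple: equation (\ref{vfphib}) determines $\phi^n$ without reference to $\psi^n$, and then (\ref{vfpsia}) determines $\psi^n$. So at each step I would first solve for $\phi^n$ and then for $\psi^n$.

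\textbf{Existence at each step.} For $\phi^n$, the bilinear form $b^n$ is continuous on $H^1\times H^1$; its coercivity comes from the $\tfrac{1}{\Delta t}$ mass term together with the $D_2\|\nabla_\y\phi\|^2$ contribution (the convective term vanishes by (\ref{velocity}) after integration by parts and the term involving the non-negative integral of $\psi^{n-1}$ is of the right sign), so Lax--Milgram yields $\phi^n\in H^1$. For $\psi^n$ the situation is subtler because $a^n$ is posed on $V\times V_1$ with $V_1\subsetneq V$. I would circumvent this by a vanishing-viscosity regularization: add $\varepsilon\int|\partial_r\psih|^2\,d\q d\y$ to $a^n$, so that on $V_1\times V_1$ coercivity holds (the $\tfrac{1}{\Delta t}$ term absorbs the cross term $\tau_0\phi^{n-1}\psih\,\partial_r\psih$ via Young's inequality for $\Delta t$ small, while $D_1A(r)|\nabla_\eta\psih|^2$ and $g(\cdot)r\,|\psih|^2$ control the tangential gradient and the growth in $r$), apply Lax--Milgram in $V_1$, then pass $\varepsilon\to 0$ using the uniform $V$-estimates derived below.

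\textbf{Sign and pointwise bounds.} For $\phi^n\ge 0$ plug $\phih=(\phi^n)_-$ into (\ref{vfphib}); since $\phi^{n-1}\ge 0$ and $\psi^{n-1}\ge 0$, the right-hand side is $\le 0$ while the left-hand side becomes $\tfrac{1}{\Delta t}\|(\phi^n)_-\|^2+D_2\|\nabla(\phi^n)_-\|^2+\tau_0\int(\int\psi^{n-1})(\phi^n)_-^2\ge 0$, forcing $(\phi^n)_-=0$. The upper bound $\phi^n\le\|\phi^0\|_\infty$ follows by testing against $(\phi^n-\|\phi^0\|_\infty)_+$. For $\psi^n\ge 0$, test (\ref{vfpsia}) against $(\psi^n)_-$, using $\psi^{n-1}\circ z_n\ge 0$ and the non-negativity of the nonlocal source. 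The pointwise upper bound $\psi^n\le M_n e^{-\alpha r}$ is the most delicate step: I set $w_n=M_n e^{-\alpha r}$ as a trial supersolution and note that $\int_r^\infty e^{-\alpha r'}\,dr'=\alpha^{-1}e^{-\alpha r}$ produces precisely the constant $k_1=2\bar g/\alpha$, while the $\tau_0\phi^{n-1}\alpha\,w_n$ and $\nabla_\eta\cdot\drift$ contributions produce $k_2$ (using (\ref{phibornn}) and (\ref{driftnorm2})). Testing (\ref{vfpsia}) with $(\psi^n-w_n)_+$ and choosing $M_n$ so that $M_n\ge M_{n-1}(1+\Delta t(k_1+k_2))$ yields the recursion leading to $M_n\le C_0 e^{(k_1+k_2)T}\le C_\infty/2$, for $\Delta t$ small enough.

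\textbf{Energy estimates.} To obtain (\ref{estimlem}), test (\ref{vfpsia}) with $\psih=\psi^n$ (after a density argument absorbing the regularization from the existence step). The discrete time-derivative contributes, via the identity $(a-b)a=\tfrac12(a^2-b^2+(a-b)^2)$ and the measure-preserving property $\det\nabla_\y z_n=1$ (so $\|\psi^{n-1}\circ z_n\|_{\La}=\|\psi^{n-1}\|_{\La}$), the expression $\tfrac{1}{2\Delta t}(\|\psi^n\|_{\La}^2-\|\psi^{n-1}\|_{\La}^2+\|\psi^n-\psi^{n-1}\circ z_n\|_{\La}^2)$. The $D_1A(r)|\nabla_\eta\psi^n|^2$ term is absorbed in the left-hand side; the drift contribution is controlled by Young's inequality and (\ref{driftnorm}), producing the $C_P^2 C_A/D_1$ factor in $k_3$; the $\tau_0\phi^{n-1}$ term yields the $\alpha\tau_0\|\phi^0\|_\infty$ contribution thanks to (\ref{phibornn}); and the nonlocal source $2\int g\,(\int_r^\infty\psi^{n-1}\,dr')\psi^n$ is estimated by Cauchy--Schwarz using the already established bound (\ref{psibornn}) on $\psi^{n-1}$, which is precisely where the factor $4\bar g\,\alpha^{-3/2}C_\infty\sqrt{|\Omega||S_2|}$ of $k_3$ originates. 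Summing over $n$, multiplying by $\Delta t$, and applying the discrete Gronwall lemma yields (\ref{estimlem}); an entirely analogous argument with $\phih=\phi^n$ in (\ref{vfphib}) (where the convective term again vanishes) gives (\ref{estimlem2}). The main obstacle is the pointwise bound (\ref{psibornn}): it is both the key input for taming the $\int_r^\infty$ fragmentation term in the energy estimate and the most delicate argument because one must design an explicit discrete supersolution whose multiplicative factor is controllable uniformly in $\Delta t$; the $V$-versus-$V_1$ mismatch in the existence step is the other technical point, but it is more routine.
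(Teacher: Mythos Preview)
Your proposal is correct and follows essentially the same route as the paper: induction on $n$, Lax--Milgram for $\phi^n$, vanishing-viscosity regularization in $r$ to get $\psi^n_\varepsilon\in V_1$ via Lax--Milgram, sign and $L^\infty$ bounds by testing against truncations, the explicit supersolution $C_n e^{-\alpha r}$ with a multiplicative recursion (the paper uses $C_n=\tfrac{1+k_1\Delta t}{1-k_2\Delta t}\,C_{n-1}$, of which your $M_n\ge M_{n-1}(1+\Delta t(k_1+k_2))$ is the first-order version), and energy estimates by testing with the solution and summing. One point the paper handles more carefully than your sketch: all the sign, pointwise, and energy bounds for $\psi$ are established at the regularized level $\psi^n_\varepsilon$ \emph{before} letting $\varepsilon\to 0$, precisely because the truncations $[\psi^n]_-$, $[\psi^n-C_n e^{-\alpha r}]_+$ and $\psi^n$ itself lie only in $V$, not in the test space $V_1$ required by (\ref{vfpsia}); this is the real reason the regularization is needed, not just for existence.
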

\begin{proof}[Proof of Lemma \ref{lemme1}] $\ $\\
Let us consider the sequence of numbers $\{C_n\}_{n=0}^N$ defined by
induction as
\begin{equation}
C_n = \dfrac{1 + k_1 \Delta t}{1- k_2 \Delta t } C_{n-1}, \quad
\mathrm{\;for\;every\;} \; n = 1, \cdots N.
\label{Cnform}
\end{equation}
with $C_0$ as in the hypothesis of the Lemma.
\\
We proceed by induction. Suppose that $\psi^{n-1}$ and $\phi^{n-1}$
are defined as elements of $V$ and $L^\infty(\Omega) $, respectively.
Suppose also that
\begin{subequations}
\begin{align}
& 0\leq \psi^{n-1} \leq C_{n-1} e^{-\alpha r},  \label{psiborn-n1}\\
& 0 \leq \phi^{n-1}\leq \|\phi^0\|_{L^\infty}.   \label{phiborn-n1}
\end{align}
\end{subequations}
We shall prove the existence of  $\psi^n \in V$ and
$\phi^n \in H^1(\Omega)$
solutions of (\ref{vfpsia}) and (\ref{vfphib}), respectively.
We also prove that they satisfy
\begin{subequations}
\begin{align}
& 0\leq \psi^n \leq C_n e^{-\alpha r},  \label{psiborn-n}\\
& 0 \leq \phi^n\leq \|\phi^0\|_{L^\infty}.   \label{phiborn-n}
\end{align}
\end{subequations}
The above inequalities give \eqref{psibornn} and \eqref{phibornn}
since we have
\begin{equation}
C_n = C_0 \left( \dfrac{1 + k_1 \Delta t}{1- k_2 \Delta t } \right)^n
\leq C_\infty
\end{equation}
for $\Delta t$ small enough.
\medskip \\
\textbf{Step 1.} {\it Regularization and existence.}\medskip\\
We introduce a regularization of $a^n$, denoted $a^n_\varepsilon$ defined on $V_1\times V_1$,
\begin{equation}
a^n_\varepsilon(\varphi_1,\varphi_2) = \varepsilon\int_{\OQ}
\deriv{r}\varphi_1 \deriv{r}\varphi_2\ d\q d \y + a^n(\varphi_1,\varphi_2).
\end{equation}
We shall first prove the existence of a sequence $\left(\psi^n_\varepsilon\right)_\varepsilon$ in $V_1$ solutions of
\begin{equation}
a_\varepsilon^n(\psi^n_\varepsilon,\psih) = l^n_a(\psih),\;\;\;\text{for\ any}\ \psih  \in V_1
\label{aregul}
\end{equation}
Clearly $a^n_\varepsilon$ is bilinear and continuous on
$V_1\times V_1$. Next we prove the coercivity of
$a^n_\varepsilon$. Indeed, let $\varphi\in V_1$ and we remark that
\begin{equation}
\int_{\Sd} 2 \eta \cdot \nabla_\eta \varphi\ \varphi\ d \eta =
\int_{\Sd} \eta \cdot \nabla_\eta \varphi^2 \ d \eta = 0
\end{equation}
since $\nabla_\eta \cdot \eta = 2$ and $\eta\cdot\eta=1$. One has
\begin{equation}
\int_{\Sd} |A( r)\drift \varphi\cdot \nabla_\eta \varphi |\ d \eta \leq \dfrac{1}{2}\int_{\Sd}\left( D_1 A( r) |\nabla_\eta \varphi |^2\ + \dfrac{C_P^2 C_A}{D_1}\varphi^2\right)\ d \eta.
\end{equation}
Finally,
\begin{equation}
\tau_0 \int_{\RR_+}\phi^{n-1} \varphi \deriv{r} \varphi\ a(r) \dd r \leq - \dfrac{1}{2}\alpha \tau_0 \int_{\RR_+}\phi^{n-1} \varphi^2\ a(r) \dd r.
\label{eq-ipp1}
\end{equation}
We remark that this inequality can be proved by using  a regularized sequence
$\left(\varphi_m\right)_m$  that converges to $\varphi$ in $V_1$ and the fact that the remaining term in the right-hand side of \eqref{eq-ipp1} can be dropped according to its appropriate sign. Then, invoking (\ref{phiborn-n1}) and the
above remarks, it follows that
\begin{equation}
\begin{aligned}
a^{1 n}_\varepsilon(\varphi,\varphi) \geq & \dfrac{D_1}{2}\int_{\OQ} A( r)|\nabla_\eta \varphi|^2 \ d\q d \y
+ \gi\int_{\OQ} r \varphi^2\ d\q d \y \\
&- \dfrac{1}{2 D_1}\left(\alpha \tau_0 D_1\|\phi^{0}\|_{L^\infty}+C_P^2 C_A\right)\int_{\OQ} \varphi^2 \ d\q d \y,
\label{coerc-a1}
\end{aligned}
\end{equation}
which in turn implies
\begin{equation}
\begin{aligned}
a^n_\varepsilon(\varphi,\varphi) \geq & \ \varepsilon\int_{\OQ} \left|\deriv{r}\varphi\right|^2 \ d\q d \y + \dfrac{D_1}{2}\int_{\OQ} A( r)|\nabla_\eta \varphi|^2 \ d\q d \y \\
& + \gi\int_{\OQ} r \varphi^2\ d\q d \y \\
&+ \dfrac{1}{2 D_1}\left(\dfrac{2D_1}{\Delta t} - \alpha \tau_0 D_1\|\phi^{0}\|_{L^\infty}-C_P^2 C_A\right)\int_{\OQ} \varphi^2 \ d\q d \y,
\label{coerceps}
\end{aligned}
\end{equation}
The coercivity of $a_\epsilon^n$
follows for $\Delta t$ small enough.
\\
Next, due to the inequality (\ref{psiborn-n1}), we have
\begin{equation}
\int_r^\infty \psi^{n-1}\, d r' \leq \frac {C_{n-1}} \alpha
\end{equation}
which implies that, for any $\varphi \in \La$,
\begin{equation}
\int_{\OQ} g(\gu^n,\uu^n, \eta) \left(\int_r^\infty \psi^{n-1}\
d r'\right)\ |\varphi| \ d\q d \y
\leq \frac {\bar g} \alpha \int_{\OQ} |\varphi|\ d r d \eta d \y.
\label{in-l1}
\end{equation}
One also obtains
\begin{equation}
\int_{\OQ} \psi^{n-1}\circ z_n\ |\varphi|
 \ d\q d \y
\leq C_{n-1} \int_{\OQ} |\varphi|\ d r d \eta d \y.
\label{in-l2}
\end{equation}
We deduce that $l_a^n \in (\La)' \subset (V_1)' $ \ by the continuous
embedding of $\La$ in $L^1$.
%
%
Applying the Lax-Milgram theorem, for all $\varepsilon > 0$ there exists a unique $\psi^n_\varepsilon\in V_1$ solution of (\ref{aregul}).
Next we will prove the existence of  solutions to (\ref{vfphib}). First,
$b^n$ is clearly a bilinear and continuous function on $H^1\times H^1$. To prove its
coercivity, let $\varphi \in H^1$.
Since
\begin{equation}
\int_\Omega \uu^n \cdot \nabla_{\y} \varphi \ \varphi = \frac 1 2
\int_\Omega \uu^n \cdot \nabla_{\y} {\varphi^2} = 0
\label{eq1}
\end{equation}
 we have
\begin{equation}
b^n(\varphi,\varphi) \geq \dfrac{1}{\Delta t}\int_{\Omega} \varphi^2\ \dd \y + D_2 \int_{\Omega}|\nabla_\y \varphi |^2\ \dd \y,
\label{coercb}
\end{equation}
using the positivity of $\psi^{n-1}$, and thus $b^n$ is coercive. Moreover, $l_b^n\in (H^1)'$ since $\phi^{n-1}\in L^\infty$. As a consequence of the Lax-Milgram theorem, there exists a unique $\phi^{n} \in H^1$ satifying (\ref{vfphib}).\medskip\\
\textbf{Step 2. } {\it $L^\infty$ - Estimates}\medskip\\
To begin we first prove two estimates for $\psi^n_\varepsilon$: for its $V$-norm and for its derivative with respect to $r$. It follows from (\ref{coerceps}) and the continuity of $l^n_a$ that there exists a constant $C>0$, dependent of $\Delta t$, such that
\begin{equation}
\begin{aligned}
&\int_{\OQ} \left( A(r)| \nabla_\eta \psi^n_\varepsilon|^2 +  (1+r)|\psi^n_\varepsilon |^2\right)\ d\q d \y \leq C,\\
&\varepsilon  \int_{\OQ} \left|\deriv{r}\psi^n_\varepsilon\right|^2\ d\q \dd \y \leq C.
\label{bornV}
\end{aligned}
\end{equation}
Next we prove the non-negativity of
$\psi^n_\varepsilon$ and $\phi^n$. Let us denote $[\cdot]_+$ and $[\cdot]_-$ respectively the positive and negative part, both positive valued. Then, $\phi^n = [\phi^n]_+ - [\phi^n]_-$ and these two parts belong to $H^1$. We have
\begin{equation}
l^n_b([\phi^n]_-)=b^n(\phi^n,[\phi^n]_-) = -b^n([\phi^n]_-,[\phi^n]_-)
\end{equation}
and invoking (\ref{linearb}) and (\ref{phiborn-n1}), $l^n_b([\phi^n]_-) \geq 0$. Therefore
\begin{equation}
b^n([\phi^n]_-,[\phi^n]_-) \leq 0,
\label{positphi}
\end{equation}
hence $\phi^n \geq0$.
Next, $\psi^n_\varepsilon = [\psi^n_\varepsilon]_+ - [\psi^n_\varepsilon]_-$, the positive and negative parts belong $V_1$, and
\begin{equation}
l^n_a([\psi^n_\varepsilon]_-)=a^n_\varepsilon(\psi^n_\varepsilon,[\psi^n_\varepsilon]_-)=-a^n_\varepsilon([\psi^n_\varepsilon]_-,[\psi^n_\varepsilon]_-),
\end{equation}
Invoking (\ref{lineara}) and (\ref{psiborn-n1}), $l^n_a([\psi^n_\varepsilon]_-)\geq 0$. Thus
\begin{equation}
a^n_\varepsilon([\psi^n_\varepsilon]_-,[\psi^n_\varepsilon]_-) \leq 0,
\label{positpsi}
\end{equation}
hence $\psi^n_\varepsilon\geq 0$.
Let us now obtain $L^\infty$ estimates . We have, according to (\ref{phiborn-n1}) and using the above notation, that
\begin{equation}
\begin{aligned}
b^n([\phi^n- &\|\phi^0\|_{L^\infty}]_+,[\phi^n-\|\phi^0\|_{L^\infty}]_+) \\
&= b^n(\phi^n-\|\phi^0\|_{L^\infty}, [\phi^n-\|\phi^0\|_{L^\infty}]_+) \\
& =  b^n(\phi^n,[\phi^n- \|\phi^0\|_{L^\infty}]_+) - b^n(\|\phi^0\|_{L^\infty},[\phi^n- \|\phi^0\|_{L^\infty}]_+)\\
& = l^n_b([\phi^n- \|\phi^0\|_{L^\infty}]_+)- b^n(\|\phi^0\|_{L^\infty},[\phi^n- \|\phi^0\|_{L^\infty}]_+)\\
& \leq \dfrac{1}{\Delta t} \int_\Omega \left( \phi^{n-1} - \|\phi^0\|_{L^\infty} \right) [\phi^n- \|\phi^0\|_{L^\infty}]_+ \ d \y,
\end{aligned}
\end{equation}
Then by (\ref{phiborn-n1})
\begin{equation}
b^n([\phi^n- \|\phi^0\|_{L^\infty}]_+ ,[\phi^n-\|\phi^0\|_{L^\infty}]_+) \leq 0,
\label{inftybphi}
\end{equation}
hence $\phi^n \leq \|\phi^0\|_{L^\infty}$.
Let $C_n$ as defined in (\ref{Cnform}); then
\begin{equation}
\begin{aligned}
 a^n_\varepsilon([\psi^n_\varepsilon-& C_n e^{-\alpha  r}]_+,[\psi^n_\varepsilon-C_n e^{-\alpha r}]_+)\\
 & =
 a^n_\varepsilon(\psi^n_\varepsilon-C_n e^{-\alpha
  r},[\psi^n_\varepsilon-C_n e^{-\alpha r}]_+)
\\
& = a^n_\varepsilon(\psi^n_\varepsilon,[\psi^n_\varepsilon-C_n e^{-\alpha r}]_+) - a^n_\varepsilon(C_n e^{-\alpha r},[\psi^n_\varepsilon-C_n e^{-\alpha r}]_+)\\
& = l^n_a([\psi^n_\varepsilon-C_n e^{-\alpha r}]_+) - a^n_\varepsilon(C_n e^{-\alpha r},[\psi^n_\varepsilon-C_n e^{-\alpha r}]_+).
\end{aligned}
\label{est1-ae}
\end{equation}
Next, for any $\varphi \in V_1$ positive,
\begin{equation}
\begin{aligned}
a^n_\varepsilon(C_n e^{-\alpha r},\varphi) = & -\varepsilon \int_{\OQ} \alpha C_n \deriv{r} \varphi \ \dd r \dd \eta\dd \y\\
& - \int_{\OQ}C_n  A(r) \mathrm{P}_{\eta^\perp}\left(\nabla_{\y} \uu^n \eta \right) \cdot \nabla_\eta \varphi \ \dd r  \dd \eta\dd \y\\
& - \int_{\OQ} C_n \tau_0 \phi^{n-1} \left(\ \deriv{r} \varphi +\alpha\  \varphi \right) \ \dd r  \dd \eta\dd \y \\
& + \int_{\OQ}  C_n g(\gu^n,\uu^n,\eta) r  \varphi \dd r \dd \eta\dd \y +  C_n \dfrac{1}{\Delta t}\int_{\OQ}   \varphi \ \dd r \dd \eta\dd \y.
\end{aligned}
\end{equation}
We remark that
\begin{equation}
\begin{aligned}
&\varepsilon \int_{\OQ} \alpha C_n \deriv{r} \varphi \ \dd r \dd \eta\dd \y = - \varepsilon \int_{\Omega\times \Sd}   \alpha C_n\varphi(r=0,\eta,\y)\ \dd \eta\dd \y  \leq 0,\\
& \int_{\OQ} C_n \tau_0 \phi^{n-1}  \deriv{r} \varphi \ \dd r  \dd \eta\dd \y = -   \int_{\Omega\times \Sd}  C_n\tau_0 \phi^{n-1} \varphi(r=0,\eta,\y)\ \dd \eta\dd \y  \leq 0.\\
\end{aligned}
\end{equation}
Then, by (\ref{Anorm}), (\ref{driftnorm2}), (\ref{phiborn-n1}) and
the positivity of $\varphi$,
\begin{equation}
\begin{aligned}
a^n_\varepsilon(C_n e^{-\alpha r},\varphi) & \geq \int_{\OQ}C_n  A(r) \nabla_\eta\cdot\left(\mathrm{P}_{\eta^\perp}\left(\nabla_{\y} \uu^n \eta \right)\right) \varphi \ \dd r  \dd \eta\dd \y\\
& +C_n\left(\dfrac{1}{\Delta t}- \alpha\tau_0\|\phi^0\|_{L^\infty}\right)\int_{\OQ} \varphi \ \dd r  \dd \eta\dd \y\\
&\geq C_n\left(\dfrac{1}{\Delta t} - k_2
\right)\int_{\OQ} \varphi \ \dd r  \dd \eta\dd \y.
\end{aligned}
\label{est2-ae}
\end{equation}
Moreover, by (\ref{lineara}), (\ref{in-l1}) and (\ref{in-l2})
\begin{equation}
l^n_a(\varphi) \leq  C_{n-1}\left(\dfrac{2 \gs}{\alpha} +
  \dfrac{1}{\Delta t}\right) \int_{\OQ} \varphi\ d r d \eta d \y.
\label{est1-la}
\end{equation}
Now, replacing $\varphi$ by $[\psi^n_\varepsilon-C_n\rm{e}^{-\alpha
  r}]_+$ and using  \eqref{est1-ae} \eqref{est2-ae}  and
\eqref{est1-la} one gets
\begin{equation}
\begin{split}
a^n_\varepsilon([\psi^n_\varepsilon- & C_n e^{-\alpha
  r}]_+,[\psi^n_\varepsilon-C_n e^{-\alpha r}]_+) \\
  & \leq
\left [ C_{n-1} \left( k_1 + \frac 1 {\Delta t} \right) -
 C_n \left(\frac 1 {\Delta t} - k_2 \right) \right] \int_{\OQ} \varphi\
d r d \eta d \y.
\end{split}
\end{equation}
Using now the particular form of $C_n$ gives
\begin{equation}
a^n_\varepsilon([\psi^n_\varepsilon-C_n e^{-\alpha r}]_+,[\psi^n_\varepsilon-C_n e^{-\alpha r}]_+) \leq 0,
\label{psiinftyborn}
\end{equation}
hence
\begin{equation}
\psi^n_\varepsilon \leq C_n e^{-\alpha r}.
\label{pnb1}
\medskip
\end{equation}
\textbf{Step 3. } {\it Convergence and  positivity} \medskip\\
%
The sequence $\left(\psi_\varepsilon^n\right)_\varepsilon$ obtained for all $\varepsilon>0$ is uniformly bounded in $V$  by
(\ref{bornV}), so it weakly converges to an element $\psi^n\in V$ up to a
subsequence. Moreover, $\left(\varepsilon^{1/2} \frac{\partial}{\partial
  r}\psi_\varepsilon^n\right)_\varepsilon$ is bounded in $\La$, then for $\varepsilon \rightarrow 0$, $\psi^n$ solves (\ref{vfpsia}).The positivity  of $\psi^n_\varepsilon$ yields the positivity of $\psi^n$. Moreover, by virtue of \eqref{pnb1},  $\psi^n$ for $\varepsilon \rightarrow 0$, and inequalities \eqref{psibornn} are satisfied.
%
\medskip\\
\textbf{Step 4. } {\it Additional estimates}\medskip
\\
From (\ref{coerceps}), (\ref{lineara}) and (\ref{psibornn}) one gets
\begin{equation}
\begin{aligned}
0 = a^n_\varepsilon(\psi^n_\varepsilon,\psi^n_\varepsilon) - l^n_a(\psi^n_\varepsilon) \geq & \dfrac{D_1}{2}\int_{\OQ} A( r)|\nabla_\eta \psi^n_\varepsilon|^2 \ d\q d \y \\
& + \gi\int_{\OQ} r |\psi^n_\varepsilon|^2\ d\q d \y \\
& - \frac {k_3} 2
\int_{\OQ} |\psi^n_\varepsilon|^2 \ d\q d \y\\
& + \dfrac{1}{\Delta t} \int_{\OQ}\left(\psi^n_\varepsilon - \psi^{n-1}
\circ z_n \right) \psi^n_\varepsilon\ d\q d \y.
\end{aligned}
\end{equation}
Remarking that $2 s_1(s_1-s_2) = s_1^2 + (s_1-s_2)^2 - s_2^2$ for any
reals $s_1, s_2$, leads to
\begin{equation}
\begin{aligned}
\begin{split}
& D_1 \int_{\OQ} A( r)|\nabla_\eta \psi^n_\varepsilon|^2 \ d\q d \y  + 2 \gi\int_{\OQ} r |\psi^n_\varepsilon|^2\ d\q d \y \\
& + \dfrac{1}{\Delta t} \int_{\OQ} \left[ |\psi^n_\varepsilon|^2  +
  |\psi^n_\varepsilon - \psi^{n-1} \circ z_n|^2 -
  |\psi^{n-1} \circ z_n |^2 \right] \ d\q d \y
\end{split}\\
 \leq k_3 \int_{\OQ} |\psi^n_\varepsilon|^2 \ d\q d \y.
\end{aligned}
\end{equation}
Then, taking the $\liminf$ for  $\varepsilon\rightarrow 0$,
multiplying by $\Delta t$ and  using the fact that
\\
$\int_\Omega |\psi^{n-1} \circ z_n|^2 = \int_\Omega |\psi^{n-1}|^2$,
gives
\begin{equation}
\begin{aligned}
\begin{split}
& D_1 \Delta t \int_{\OQ} A( r)|\nabla_\eta \psi^n|^2 \ d\q d \y  + 2 \gi \Delta t \int_{\OQ} r |\psi^n|^2\ d\q d \y \\
& + (1 - k_3 \Delta t)  \int_{\OQ}|\psi^n|^2 \ d\q d \y +
 \int_{\OQ}
  |\psi^n - \psi^{n-1} \circ z_n|^2  \ d\q d \y
  \end{split}\\
 \leq \int_{\OQ} |\psi^{n-1}|^2 \ d\q d \y.
\end{aligned}
\end{equation}
Multiply the last inequality by $(1 - k_3 \Delta t)^{n-1}$ and sum over $n$ from $n=1$ to $n=N$. Use the inequality
$$
(1 - k_3 \Delta t)^n \geq (1 - k_3 \Delta t)^N \geq
\frac 1 2 e^{- k_3 T}
  $$
to get \eqref{estimlem}.
Taking $\hat{\phi} = \phi^n$ in \eqref{disphi} and using
\eqref{phibornn} and \eqref{eq1}
we obtain
%
\begin{equation}
\dfrac{1}{2 \Delta t}\int_{\Omega}\left(|\phi^n|^2 +|\phi^n -
  \phi^{n-1}|^2 - |\phi^{n-1}|^2\right) \ \dd \y +
D_2 \int_{\Omega}|\nabla_\y \phi^n |^2\ \dd \y \leq 0
\end{equation}
Summing over $n$ from 1 to $N$ produces (\ref{estimlem2}), which
ends the proof. \bigskip
\end{proof}
\subsection{Construction of a solution}
%
%
We now define, for any $N$ large enough,  the following functions
\begin{equation}
\psi_N(\cdot,t) = \dfrac{t-t_{n-1}}{\Delta
  t}\psi^n(\cdot)+\dfrac{t_n-t}{\Delta t}\psi^{n-1}(\cdot),
\;\;\; t \in[t_{n-1},t_n]
\end{equation}
and
\begin{equation}
\psi_N^+(\cdot,t) = \psi^n(\cdot),\;\;\; \psi_N^{-}(\cdot,t) = \psi^{n-1}(\cdot),\;\;\; t \in(t_{n-1},t_n]
\end{equation}
for $ n = 1, \cdots N$.
\\
We shall use analogous notations for $\phi_N$ and $\uu_N$.
Let $\psit\in \mathcal{X}_1, \; \phit\in \mathcal{X}_2$, both be test functions \ and set
$\psih  = \int_{t_{n-1}}^{t_n} \psit \, dt$ \ and \
$ \phih = \int_{t_{n-1}}^{t_n} \phit \, dt$.
It is clear that $ \psih \in V_1 $ and $ \phih \in H^1(\Omega)$. Then
%
%
\begin{equation}
\begin{aligned}
& \int_{t_{n-1}}^{t_n} a^n(\psi^n, \psit(\cdot, t)) \, dt =
\int_{t_{n-1}}^{t_n} l_a^n(\psit(\cdot, t)) \, dt,\\
& \int_{t_{n-1}}^{t_n} b^n(\psi^n, \psit(\cdot, t)) \, dt =
\int_{t_{n-1}}^{t_n} l_b^n(\psit(\cdot, t)) \, dt.
\end{aligned}
\end{equation}
Adding these inequalities, we obtain, for any $\psit \in \mathcal{X}_1$,
\begin{equation}
\begin{aligned}
\begin{split}
& \int_0^T\int_{\OQ} \dfrac{\psi_N^{+}(r,\eta,\y,t) -
\psi_N^{-}(r,\eta,\z_N(\y, t),t)}{\Delta t}\ \psit(r,\eta,\y,t)\  d\q\dd \y\\
&+ D_1 \int_0^T \int_{\OQ} A(r) \nabla_\eta\psi_N^{+}\cdot\left(\nabla_\eta\psit - 2\eta \psit \right) \ d\q\dd \y\\
&- \int_0^T \int_{\OQ} A(r) \mathrm{P}_{\eta^\perp}\left(\nabla_{\y} \uu_N^{+} \eta \right)\psi_N^{+} \cdot \nabla_\eta \psit \ d\q\dd \y\\
&+ \int_0^T\int_{\OQ}\psi_N^{+} \left( g(\gu_N^+, \uu_N^{+},\eta) r \psit - \tau_0\phi_N^{-}\left(\deriv{r} \psit +  \alpha \psit\right) \right) \ d\q\dd \y
\end{split}\\
= 2 \int_0^T\int_{\OQ}  g(\gu_N^+, \uu_N^{+} ,\eta) \left(\int_r^\infty \psi_N^{-}\ \dd
    r' \right)\psit \ d\q\dd \y,
\label{e-psn}
\end{aligned}
\end{equation}
where in the above,
\begin{equation}
\x_N(\y,t)= x_n(\y) \quad \text{and} \quad
\z_N(\y,t)= z_n(\y), \quad \text{for\ any } \; t \in \, (t_{n-1},t_n).
\end{equation}
Proceeding likewise, for any $\phit \in \mathcal{X}_2$,
\begin{equation}
\begin{aligned}
\begin{split}
& \int_0^T\int_{\Omega}
\dfrac{\phi_N^{+}(\y,t)-\phi_N^{-}(\y,t)}{\Delta t}\ \phit(\y,t) \ \dd
\y \dd t +  \int_0^T\int_{\Omega} \left(u_N^+ \cdot \nabla_y \phi_N^{+}\right)
\phit \ \dd \y \dd t \\
&+ \int_0^T\int_{\Omega} D_2\ \nabla_\y \phi_N^{+} \cdot \nabla_\y \phit \ d \y d t
+ \tau_0 \int_0^T\int_{\Omega}\phi_N^{+}  \left(\int_{\Sd\times\RR_+} \psi_N^{-}\ \dd r\dd \eta \right)\phit \ \dd \y d t\\
\end{split}\\
= 0.
\label{e-phn}
\end{aligned}
\end{equation}
However, to evaluate the limit $\Delta t \rightarrow 0$, we need some additional convergence results about the approximations. First, let us define the maps,
\begin{equation}
\begin{aligned}
&\Lambda_1 [\psi] (\y,t) = \int_{\Sd\times \RR_+} \psi(r,\eta,\y,t) \ d r d \eta,&\\
&\Lambda_2 [\psi](r,\eta,\y,t) = \int_r^\infty \psi(r',\eta,\y,t) \ d r',&\text{for\ any}\ \psi\in L^2(0,T;\La).
\end{aligned}
\end{equation}
We have the following lemma:
\begin{lemma}
Let   $\phi^0\in L^\infty(\Omega), \; \phi^0 \geq 0 $ \ and \ $\psi^0\in \La$
such that\medskip
\[
0 \leq \psi^0 \leq C_0 e^{- \alpha r} \quad \text {a.e in } \; Q
 \medskip\]
with $C_0 > 0$ a constant. For $\left\{\psi_N\right\}_N$ and $\left\{\psi_N^\pm\right\}_N$, constructed by virtue of Lemma \ref{lemme1}, there exists
$\psi \in L^2(0,T;V)\cap L^\infty(0,T;\La)$, positive, such that, for $ N \rightarrow + \infty$ we have the following convergence, up to a
subsequence of $N$:
\begin{align}
&\psi_N^{\pm} \rightharpoonup \psi & \ * - weakly\ in\ L^\infty(0,T;\La),\label{psic}\\
&A^{1/2} \nabla_\eta\psi_N^{+}\rightharpoonup A^{1/2}  \nabla_\eta \psi & weakly\ in\ L^2(0,T;\La),\label{dpsic}\\
&r^{1/2} \psi_N^{+}\rightharpoonup r^{1/2} \psi & weakly\ in\ L^2(0,T;\La),\label{rpsic}\\
&\Lambda_1 [\psi_N^-] \rightharpoonup \Lambda_1 [\psi] & weakly\ in\ L^2((0,T)\times\Omega),\label{lbd1c}\\
&\Lambda_2 [\psi_N^-]  \rightharpoonup \Lambda_2 [\psi]& weakly\ in\ L^2(0,T;\La).\label{lbd2c}
\end{align}
\label{convergence1}
\end{lemma}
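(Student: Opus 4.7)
The plan is to convert the discrete estimates of Lemma~\ref{lemme1} into uniform bounds for the time-interpolants $\psi_N^\pm$, extract weakly convergent subsequences via Banach-Alaoglu, and identify the weak limits. The only delicate point is verifying that $\psi_N^+$ and $\psi_N^-$ share a common weak limit.

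The estimate \eqref{estimlem} yields directly $\|\psi_N^\pm\|_{L^\infty(0,T;\La)}^2\leq 4 e^{k_3 T}\|\psi^0\|_{\La}^2$, $\|A^{1/2}\nabla_\eta\psi_N^+\|_{L^2(0,T;\La)}^2\leq C$, and $\|r^{1/2}\psi_N^+\|_{L^2(0,T;\La)}^2\leq C$. Shifting the index in the $r$-weighted sum and exploiting $\int_{\OQ} r|\psi^0|^2\,d\q\,d\y<\infty$ (which follows from the pointwise bound $\psi^0\leq C_0 e^{-\alpha r}$) gives the analogous $r$-weighted control on $\psi_N^-$. By the Banach-Alaoglu theorem applied to $L^2(0,T;\La)$ and to the dual of the separable space $L^1(0,T;\La)$, along a common subsequence I obtain weak-$*$ $L^\infty(0,T;\La)$ limits $\psi^+$ of $\psi_N^+$ and $\psi^-$ of $\psi_N^-$, together with weak $L^2(0,T;\La)$ limits for the sequences $A^{1/2}\nabla_\eta\psi_N^+$, $r^{1/2}\psi_N^+$, and $r^{1/2}\psi_N^-$. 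Testing against $\mathcal{C}^\infty_c$ functions and using uniqueness of distributional limits identifies these three limits as $A^{1/2}\nabla_\eta\psi^+$, $r^{1/2}\psi^+$, and $r^{1/2}\psi^-$ respectively, so that $\psi^+\in L^2(0,T;V)\cap L^\infty(0,T;\La)$.

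The crux is then the identification $\psi^+=\psi^-$. Picking $\phi\in\mathcal{C}^\infty_c((0,T)\times\OQ)$ and setting $\phi_n:=\int_{t_{n-1}}^{t_n}\phi(\cdot,t)\,dt\in V_1$, an Abel summation yields
\begin{align*}
\int_0^T\!\int_{\OQ}(\psi_N^+-\psi_N^-)\,\phi\,d\q\,d\y\,dt
&= \sum_{n=1}^N\int_{\OQ}(\psi^n-\psi^{n-1})\,\phi_n\,d\q\,d\y \\
&= \int_{\OQ}\psi^N\phi_N\,d\q\,d\y - \int_{\OQ}\psi^0\phi_1\,d\q\,d\y \\
&\quad + \sum_{n=1}^{N-1}\int_{\OQ}\psi^n(\phi_n-\phi_{n+1})\,d\q\,d\y.
\end{align*}
For $\Delta t$ small enough the two boundary contributions vanish thanks to the compact support of $\phi$ in $(0,T)$. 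A Taylor expansion of $\phi$ about $t_n$ gives $\|\phi_n-\phi_{n+1}\|_{\La}\lesssim (\Delta t)^2\|\partial_t\phi\|_{L^\infty}$; combined with the uniform $\La$-bound on $\psi^n$ and with $N\sim 1/\Delta t$ terms, the remaining sum is $O(\Delta t)\to 0$. Density of $\mathcal{C}^\infty_c$ in $L^1(0,T;\La)$ and the uniform bound $\|\psi_N^+-\psi_N^-\|_{L^\infty(0,T;\La)}\leq C$ then extend the conclusion to all test functions in $L^1(0,T;\La)$, giving $\psi^+=\psi^-=:\psi$ and settling \eqref{psic}, \eqref{dpsic}, \eqref{rpsic}.

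Finally, Cauchy-Schwarz with weight $e^{-\alpha r}$ shows $\Lambda_1:\La\to L^2(\Omega)$ is bounded, and a Fubini computation gives $\|\Lambda_2[\chi]\|_{\La}^2\leq\alpha^{-1}\int_{\OQ} r|\chi|^2\,d\q\,d\y$, so $\Lambda_2$ is bounded from the $r$-weighted subspace of $\La$ into $\La$. Both operators being linear and continuous are weakly continuous, which transfers the weak convergence of $\psi_N^-$ to \eqref{lbd1c} and \eqref{lbd2c}. The positivity $\psi\geq 0$ follows since $\psi_N^\pm\geq 0$ pointwise a.e.\ by Lemma~\ref{lemme1} and the non-negative cone in $L^2$ is weakly closed. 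The main technical obstacle is the identification $\psi^+=\psi^-$, which the Abel summation step resolves by shifting time differences onto the smooth test function, thereby circumventing the lack of a direct control on $\|\psi^n-\psi^{n-1}\|_{\La}$ (we only control the ``twisted'' difference $\|\psi^n-\psi^{n-1}\circ z_n\|_{\La}$).
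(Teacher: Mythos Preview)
Your proof is correct, and the overall architecture (uniform bounds from \eqref{estimlem}, Banach--Alaoglu, identification of limits, weak continuity of $\Lambda_1,\Lambda_2$) matches the paper. The one genuinely different step is the identification $\psi^+=\psi^-$.

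The paper does not use your Abel-summation argument. Instead it exploits the twisted-difference piece of \eqref{estimlem}, namely $\sum_n\|\psi^n-\psi^{n-1}\circ z_n\|_{\La}^2\leq C$, to get $\psi_N^+-\psi_N^-\circ z_N\to 0$ strongly in $L^2(0,T;\La)$; it then shows separately, via the estimate $\|\x_N(\cdot,t)-\mathrm{id}\|_{L^\infty}\leq \Delta t\,\|\uu\|_{L^\infty}$, that $\psi_N^--\psi_N^-\circ z_N\to 0$ in the sense of distributions (by throwing the composition onto a smooth test function). Combining the two gives $\psi^+=\psi^-$. Your approach is more elementary and self-contained: it needs only the uniform $L^\infty(0,T;\La)$ bound on $\psi^n$ and completely bypasses the characteristic flow $z_N$ and the twisted-difference estimate. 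The paper's route, on the other hand, makes explicit use of the full estimate \eqref{estimlem} already proved in Lemma~\ref{lemme1}, so it is a natural continuation of that lemma; it also fits the spirit of the characteristics-based semi-discretization used throughout. Either argument is perfectly adequate here.

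One minor remark: your treatment of $\Lambda_2$ via the bound $\|\Lambda_2[\chi]\|_{\La}^2\leq\alpha^{-1}\int_{\OQ} r|\chi|^2\,d\q\,d\y$ requires the $r$-weighted control on $\psi_N^-$, which you correctly obtain by the index shift plus $\int r|\psi^0|^2\,d\q\,d\y<\infty$. The paper's proof at this point actually argues with $\psi_N^+$ (using \eqref{rpsic}), so your version is in fact slightly more careful about matching the statement of \eqref{lbd2c}.
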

\begin{proof}
%
%
It is clear from  \eqref{estimlem} that
\begin{equation}
\psi_N^+ \quad \text{ is bounded in } \; L^2(0,T;V)
\end{equation}
and
\begin{equation}
\psi_N^{\pm} \quad \text{ is bounded in } \; L^\infty(0,T;L_\alpha^2).
\label{bd-psm}
\end{equation}
We then deduce that
\begin{equation}
\psi_N^- \circ z_N \quad \text{ is bounded in } \; L^\infty(0,T;L_\alpha^2).
\end{equation}
From  \eqref{estimlem} one infers
\begin{equation}
\psi_N^+ - \psi_N^- \circ z_N  \rightarrow 0 \quad \text{in the norm of } \; \;
L^2(0,T;L_\alpha^2).
\end{equation}
Then there exists $ \psi^+ \in L^2(0,T;V) \cap L^\infty(0,T;L_\alpha^2)$ and
$\psi^- \in L^\infty(0,T;L_\alpha^2)$ \ such that, up to a subsequence in $N$
we have
\begin{equation}
\psi_N^+ \rightharpoonup \psi^+ \quad weakly\ in \; L^2(0,T;V)
\end{equation}
\begin{equation}
\psi_N^{\pm} \rightharpoonup \psi^{\pm} \quad *- weakly\ in \;
L^\infty(0,T;L_\alpha^2),
\end{equation}
and
\begin{equation}
\psi_N^- \circ z_N \rightharpoonup  \psi^+ \quad *-weakly\ in \;
L^\infty(0,T;L_\alpha^2).
\end{equation}
On the other hand we have
\begin{equation}
\begin{aligned}
x_n(\y) - \y & = \chi^n(t_n ; t_{n-1} ,\y) - \chi^n (t_{n-1};t_{n-1},\y)\\
& = \Delta t\ \frac {\partial }{\partial t} \chi^n (\xi;t_{n-1},\y) \\
& = \Delta t\ \uu^n(\chi^n (\xi;t_{n-1},\y)).
\end{aligned}
\end{equation}
This implies
\begin{equation}
\left\| \x_N(\y, t) - \y \right\|_{L^\infty(]0,T[ \times\Omega)} \leq \
\Delta t \, \left\| \uu \right\|_{L^\infty(]0,T[ \times\Omega)}
\label{sm-dif}
\end{equation}
%
Now, for any $\psit \in \mathcal{C}^\infty_0(Q\times \Omega \times ]0,T[)$, with the help of \eqref{sm-dif} and \eqref{bd-psm}, we obtain
\begin{equation}
\begin{aligned}
 &\left| \int_0^T \int_{\OQ} \left[\psi_N^{-}(r,\eta,\y,t) - \psi_N^{-}(r,\eta,\z_N(\y,t),t)\right] \psit(r,\eta,\y, t)\ d\q d \y dt \right| \\
& =\left| \int_0^T \int_{\OQ} \psi_N^{-}(r,\eta,\y,t) \left[\psit(r,\eta,\y, t) - \psit(r,\eta,\x_N(\y,t), t)\right]\ d\q d \y dt \right|\\
&\leq C \Delta t \left\| \uu \right\|_{L^\infty([0,T]\times\Omega)} \|\psit\|_{C^1}.
\end{aligned}
\end{equation}
We deduce  that \
$ \psi_N^- - \psi_N^- \circ z_N  \rightarrow 0 $ \ in the sense of distributions \ $\mathcal{D}'(Q \times ]0,T[)$. This leads to the conclusion that
 $\psi^+ = \psi^-$, and we denote
by $\psi$ the common value $\psi^+$ or $\psi^-$.
Therefore \eqref{psic}, \eqref{dpsic} and \eqref{rpsic} are proved.
%
%
Let now $\varphi \in L^2((0,T)\times\Omega)$
\begin{equation}
\begin{aligned}
&\int_0^T \int_{\Omega}\left(\Lambda_1 \psi_N^{-} - \Lambda_1 \psi\right) \varphi(\y,t)\ d \y d t\\
 & = \int_0^T \int_{\OQ} \psi_N^{-}  \varphi e^{-\alpha r} \ d\q d \y d t - \int_0^T \int_{\OQ} \psi  \varphi e^{-\alpha r} \ d\q d \y d t\\
 &\rightarrow 0,\;\;\;\text{as}\; N \rightarrow + \infty
\end{aligned}
\end{equation}
since $\varphi e^{-\alpha r} \in \La$. Now, invoking (\ref{psic}), proves (\ref{lbd1c}). Finally, let $\psit\in L^2(0,T;\La)$ and with the help of (\ref{rpsic}) we get
\begin{equation}
\begin{aligned}
& \int_0^T \int_{\OQ} \left(\Lambda_2 \psi_N^{ +}-\Lambda_2 \psi \right) \psit \ d\q d \y d t \\
& =  \int_0^T \int_{\OQ} r \psi_N^{+} \psit \ d\q d \y d t -  \int_0^T \int_{\OQ} r \psi  \psit \ d\q d \y d t \\
& \rightarrow 0,\;\;\;\text{as}\;\; N \rightarrow +\infty.
\end{aligned}
\end{equation}
Which proves (\ref{lbd2c}).
The positivity of $\psi$ follows from the positivity of $\psi^n$ for any $n$. This ends the proof.
\medskip
\end{proof}
We now focus on the convergence of the $\phi_N$ sequence.
\begin{lemma}
Let   $\phi^0\in L^\infty(\Omega), \; \phi^0 \geq 0 $ \ and \ $\psi^0\in \La$
such that \medskip
\[
0 \leq \psi^0 \leq C_0 e^{- \alpha r} \quad \text {a.e in } \; Q
\medskip\]
with $C_0 > 0$ a constant. For $\left\{\phi_N\right\}_N$ and $\left\{\phi_N^\pm\right\}_N$, constructed by virtue of Lemma \ref{lemme1},
there exists $\phi \in L^2(0,T;H^1)\cap L^\infty(0,T;L^2)$ positive such that  we have the following convergence, up to a subsequence of $N$:
\begin{align}
&\nabla_{\y} \phi_N^{+} \rightharpoonup \nabla_{\y} \phi\ &  weakly \  L^2(0,T;L^2)\label{dphic1}\\
& \phi_N^{\pm}, \; \phi_N \rightarrow \phi & strongly\ L^2(0,T;L^2(\Omega)) \label{strong}
\end{align}
\label{convergence2}
\end{lemma}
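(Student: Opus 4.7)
My plan is to combine the a priori bounds of Lemma \ref{lemme1} with an Aubin--Lions type compactness argument, in the spirit of what was done for $\psi$ in Lemma \ref{convergence1}. First, \eqref{estimlem2} directly yields that $\{\phi_N^\pm\}$ and $\{\phi_N\}$ are uniformly bounded in $L^\infty(0,T;L^2(\Omega))$, and that $\{\phi_N^+\}$ is bounded in $L^2(0,T;H^1(\Omega))$. Weak and $*$-weak compactness then give, up to a subsequence, a limit $\phi\in L^\infty(0,T;L^2(\Omega))\cap L^2(0,T;H^1(\Omega))$ with $\phi_N^+\rightharpoonup\phi$ weakly in $L^2(0,T;H^1(\Omega))$; in particular $\nabla_\y\phi_N^+\rightharpoonup\nabla_\y\phi$ in $L^2((0,T)\times\Omega)$, which proves \eqref{dphic1}.

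Second, to upgrade this into strong $L^2(0,T;L^2(\Omega))$ convergence, I would estimate the discrete time derivative of the piecewise affine interpolant $\phi_N$ in $(H^1(\Omega))'$. Since $\partial_t\phi_N\equiv(\phi^n-\phi^{n-1})/\Delta t$ on $(t_{n-1},t_n)$, the weak formulation \eqref{disphi} gives, for every $v\in H^1(\Omega)$,
\[
\int_\Omega \partial_t\phi_N\,v\,\dd\y = -\int_\Omega (\uu^n\cdot\nabla_\y\phi^n)v\,\dd\y - D_2\int_\Omega \nabla_\y\phi^n\cdot\nabla_\y v\,\dd\y -\tau_0\int_\Omega \phi^n\,\Lambda_1[\psi^{n-1}]\,v\,\dd\y.
\]
The crucial observation is that the pointwise bound $\psi^{n-1}\leq C_\infty e^{-\alpha r}$ from \eqref{psibornn} gives $\|\Lambda_1[\psi^{n-1}]\|_{L^\infty(\Omega)}\leq 4\pi C_\infty/\alpha$ uniformly in $n$ and $N$. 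Combined with the $L^\infty$ bound on $\uu$, the $L^\infty$ bound \eqref{phibornn} and Cauchy--Schwarz, this produces $\|\partial_t\phi_N(\cdot,t)\|_{(H^1)'}\leq C\|\phi_N^+(\cdot,t)\|_{H^1}$, so that $\{\partial_t\phi_N\}$ is uniformly bounded in $L^2(0,T;(H^1)')$.

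Third, with $\{\phi_N\}$ bounded in $L^2(0,T;H^1(\Omega))$ and $\{\partial_t\phi_N\}$ bounded in $L^2(0,T;(H^1)')$, the Aubin--Lions lemma (using the compact embedding $H^1(\Omega)\hookrightarrow\hookrightarrow L^2(\Omega)$) yields strong convergence $\phi_N\to\phi$ in $L^2(0,T;L^2(\Omega))$ along the same subsequence. To transfer this to $\phi_N^\pm$, I would use the direct computation
\[
\|\phi_N^+-\phi_N\|_{L^2(0,T;L^2)}^2 = \frac{\Delta t}{3}\sum_{n=1}^N\|\phi^n-\phi^{n-1}\|_{L^2(\Omega)}^2 \leq C\,\Delta t,
\]
together with its analogue for $\phi_N^-$, both of which vanish as $N\to\infty$ by \eqref{estimlem2}; this proves \eqref{strong}. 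Positivity of $\phi$ follows from that of each $\phi^n$ via a.e.\ convergence along a further subsequence.

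The main obstacle is extracting enough time regularity to run Aubin--Lions: the coupling term $\tau_0\phi^n\Lambda_1[\psi^{n-1}]$ in the time-discrete equation has to be controlled in $(H^1)'$ uniformly in $N$, and this is precisely what the exponentially decaying pointwise bound on $\psi^{n-1}$ supplied by Lemma \ref{lemme1} delivers. Once that term is tamed, the remaining contributions from transport and diffusion are handled by the $L^2(0,T;H^1)$ bound on $\phi_N^+$ and the usual Cauchy--Schwarz estimates.
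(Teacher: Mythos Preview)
Your overall strategy coincides with the paper's: extract the energy bounds from \eqref{estimlem2}, control $\partial_t\phi_N$ in $L^2(0,T;(H^1(\Omega))')$ using \eqref{disphi}, invoke Aubin--Lions for strong $L^2$ compactness, and transfer the conclusion between $\phi_N$ and $\phi_N^{\pm}$ via the vanishing of $\sum_n\|\phi^n-\phi^{n-1}\|_{L^2(\Omega)}^2$. The estimate on $\partial_t\phi_N$ and the computation $\|\phi_N-\phi_N^{\pm}\|_{L^2(0,T;L^2)}^2=\tfrac{\Delta t}{3}\sum_n\|\phi^n-\phi^{n-1}\|_{L^2}^2$ are both correct.

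There is, however, a genuine gap in your third paragraph: you assert that $\{\phi_N\}$ is bounded in $L^2(0,T;H^1(\Omega))$, but you only established this for $\{\phi_N^{+}\}$. On the first subinterval $[0,\Delta t]$ one has $\phi_N(\cdot,t)=\tfrac{t}{\Delta t}\phi^1+\tfrac{\Delta t-t}{\Delta t}\phi^0$, and the hypothesis is merely $\phi^0\in L^\infty(\Omega)$, not $\phi^0\in H^1(\Omega)$. Hence in general $\phi_N\notin L^2(0,T;H^1(\Omega))$, and the classical Aubin--Lions lemma cannot be applied on the full interval $[0,T]$ as you propose. The paper circumvents this by noting that $\phi_N$ (and $\phi_N^-$) \emph{is} bounded in $L^2(\delta,T;H^1(\Omega))$ for every fixed $\delta\in\,]0,T[$, since for $\Delta t<\delta$ only the iterates $\phi^1,\dots,\phi^N$ are involved; Aubin--Lions then yields strong convergence in $L^2(\delta,T;L^2(\Omega))$, and a short $\varepsilon/2$ argument using the uniform $L^\infty(0,T;L^2(\Omega))$ bound upgrades this to $L^2(0,T;L^2(\Omega))$. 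Your argument needs exactly this additional step (or, alternatively, a discrete Aubin--Lions lemma applied directly to $\phi_N^{+}$).
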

\begin{proof}
From (\ref{estimlem2}), we deduce that
\begin{equation}
\phi_N^+ \quad \text{ is bounded in } \; L^2(0,T;H^1(\Omega)),
\label{bl1}
\end{equation}
\begin{equation}
\phi_N^{\pm} \quad \text{ is bounded in } \; L^\infty(0,T;L_\alpha^2)
\label{bl2}
\end{equation}
and
\begin{equation}
\phi_N^- \quad \text{ is bounded in } \; L^2(\delta, T; H^1(\Omega)) \; \;
\text{ for any } \; \; \delta \in ]0, T[.
\label{bl3}
\end{equation}
Since we have
$$
\phi_N = \frac {t_n - t} {\Delta t} \phi_N^- +
\frac {t - t_{n-1}} {\Delta t} \phi_N^+
   $$
we deduce that
\begin{equation}
\phi_N \quad \text{ is bounded in } \; L^\infty(0,T;L_\alpha^2)
\label{bl4}
\end{equation}
and
\begin{equation}
\phi_N \quad \text{ is bounded in } \; L^2(\delta, T; H^1(\Omega)) \; \;
\text{ for any } \; \; \delta \in ]0, T[.
\label{bl5}
\end{equation}
It follows there exists a $\phi \in L^2(0,T;H^1)\cap L^\infty(0,T;L^2)$ such that
\eqref{dphic1} is satisfied.
On the other hand, from the equality
\begin{equation}
\frac{\partial \phi_N} {\partial t} = \frac {\phi^n - \phi^{n-1}} {\Delta t} \quad
\text{ on } \; \; [t_{n-1}, \; t_n]
\end{equation}
and from \eqref{disphi} we deduce that for any $\phih \in H^1(\Omega)$ we have
\begin{equation}
\begin{split}
\int_\Omega \dfrac{\partial \phi_N} {\partial t} \phih\ \dd \y = &
- \int_\Omega u_N^+ \cdot  \nabla_{\y}\phi_N^{+} \phih \ d \y  - D_2 \int_\Omega \nabla_{\y}\phi_N^{+}\cdot\nabla_{\y}\phih\ \dd \y\\
 & - \tau_0 \int_{\Omega} \phi_N^{+} \left(\int_{\Sd\times\RR_+}\psi_N^{-}\ d r d \eta \right) \phih \ d \y
\end{split}
\end{equation}
Using \eqref{bl1} and \eqref{bd-psm}, gives
\begin{equation}
 \frac {\partial \phi_N} {\partial t} \quad \text{ is bounded in } \;
L^2(0, T ; (H^1(\Omega))').
\label{bl6}
\end{equation}
Then, up to a subsequence of $N$, we have
\begin{equation}
\phi_N \rightarrow \phi \quad \text{ strongly in } \;
L^2(\delta , T ; L^2(\Omega)), \quad \text{ for any } \; \; \delta \in \, ]0, T[.
\label{bl7}
\end{equation}
Let us now prove that
\begin{equation}
\phi_N \rightarrow \phi \quad \mathrm{strongly\ in}  \;
L^2(0, T ; L^2(\Omega)).
\label{bl8}
\end{equation}
We fix $\varepsilon > 0$ and we have for any $\delta \in \, ]0, T[$:
\begin{equation}
\begin{aligned}
\int_0^T \| \phi_N - \phi \|_{L^2(\Omega)}^2 \, dt & =
\int_0^{\delta} \| \phi_N - \phi \|_{L^2(\Omega)}^2 \, dt +
\int_{\delta}^T \| \phi_N - \phi \|_{L^2(\Omega)}^2 \, dt\\
& \leq 2 C \delta + \int_{\delta}^T \| \phi_N - \phi \|_{L^2(\Omega)}^2 \, dt
\end{aligned}
\end{equation}
where $C$ is an upper bound for $\|\phi_N\|_{L^\infty(0, T ; L^2)}$ and
$\|\phi\|_{L^\infty(0, T ; L^2)}$.
Now taking $ \delta = \frac {\varepsilon} {4 C} $ we obtain from \eqref{bl7} that
for $N$ large enough
\begin{equation}
\int_{\delta}^T \| \phi_N - \phi \|_{L^2(\Omega)}^2 \, dt \leq \frac \varepsilon 2,
\end{equation}
which proves \eqref{bl8}.
From \eqref{estimlem2} one gets
\begin{equation}
\phi_N^+ - \phi_N^- \rightarrow 0 \quad \mathrm{strongly\ in}  \; \;
L^2(0, T; L^2(\Omega)).
\end{equation}
Using the fact that
\begin{equation}
\phi_N - \phi_N^+ = \frac {t - t_n} {\Delta t} (\phi_N^+ - \phi_N^-)
\end{equation}
and
\begin{equation}
\phi_N - \phi_N^- = \frac {t - t_{n-1}} {\Delta t} (\phi_N^+ - \phi_N^-)
\end{equation}
leads to
\begin{equation}
\phi_N - \phi_N^{\pm} \rightarrow 0 \quad \mathrm{strongly\ in}  \; \;
L^2(0, T; L^2(\Omega)).
\end{equation}
This ends the proof.
\bigskip
\end{proof}

\subsection{Final stage of the proof of the main result}

In the following we let $N \rightarrow + \infty$ in \eqref{e-psn} and \eqref{e-phn}
with $\psit \in \mathcal{C}^\infty_c((-T,T)\times\overline{\Omega}\times\Sd\times[0,+\infty))$ and
$\phit \in \mathcal{C}^\infty_c((-T,T)\times\overline{\Omega}\times\Sd\times[0,+\infty))$, respectively.
We now prove that $\psi$ and $\phi$ given by Lemmas \ref{convergence1} and \ref{convergence2}  satisfy the  variational equalities \eqref{vfpsi}  and \eqref{vfphi}, respectively.
%
%
%
%
%
%
Since $\Delta t$ is small enough, we have
\begin{equation}
\begin{aligned}
& \int_0^T\int_{\OQ} \dfrac{\psi_N^{+}(r,\eta,\y,t) -\psi_N^{-}(r,\eta,\z_N(\y,t),t)}{\Delta t}\ \psit(r,\eta,\y,t)\  d\q\dd \y d t\\
& = - \int_0^T\int_{\OQ} \psi_N^{-}(r,\eta,\y,t) \dfrac{\psit(r,\eta,\x_N(\y,t),t) -\psit(r,\eta,\y,t-\Delta t)}{\Delta t}\  d\q\dd \y d t \\
& \; \; \; \; - \dfrac{1}{\Delta t} \int_0^{\Delta t} \int_{\OQ} \psi^0(r,\eta,\y) \psit(r,\eta,\y,t-\Delta t)\ d\q\dd \y d t.
\end{aligned}
\label{dif-fin}
\end{equation}
Smoothness of $\psit$ entails
%
%
\begin{equation}
 \dfrac{1}{\Delta t} \int_0^{\Delta t} \int_{\OQ} \psi^0(r,\eta,\y) \psit(r,\eta,\y,t-\Delta t)\  d\q \dd \y d t \rightarrow \int_{\OQ} \psi^0 \psit(t=0) \ d\q \dd \y,
\label{cv-in}
\end{equation}
and
\begin{equation}
\dfrac{\psit(r,\eta,\y,t)-\psit(r,\eta,\y,t-\Delta t)}{\Delta t} \rightarrow
\deriv{t}\psit(r,\eta,\y,t)  \quad strongly\ in \; L^2(0, T; L_\alpha^2).
\label{cv-df1}
\end{equation}
We also have
\begin{equation}
\dfrac{\psit(r,\eta,\x_n(\y),t)-\psit(r,\eta,\y,t)}{\Delta t}
= \nabla_y \psit (r, \eta, \y + \theta_1 (\x_n(\y) - \y), t) \cdot \xi_N,
\end{equation}
with $ \theta_1 \in ]0, 1[$ \ and
\begin{equation}
\xi_N = \frac {\x_n(\y) - \y} {\Delta t}.
\end{equation}
Since $ \x_n(y) = \chi^n(t_{n-1}, t_n, \y)$ we have
\begin{equation}
\begin{aligned}
\xi_N & = \frac{\chi^n(t_{n-1}, t_n, \y) - \chi^n(t_n, t_n, \y)} {\Delta t},\\
 &= -
\frac {\partial \chi^n} {\partial t} (t_{n-1} + \theta_2 \Delta t, t_n, \y)\,\
& = - \uu^n(\chi^n (t_{n-1} + \theta_2 \Delta t, t_n, \y)),
\end{aligned}
\end{equation}
with \ $ \theta_2 \in ]0, 1[$.
Then
\begin{equation}
\begin{aligned}
&\dfrac{\psit(r,\eta,\x_n(\y),t)-\psit(r,\eta,\y,t)}{\Delta t}\\
&\qquad \qquad = - \nabla_y \psit (r, \eta, \y + \theta_1 (\x_n(\y) - \y), t) \cdot \uu^n(\chi^n (t_{n-1} + \theta_2 \Delta t, t_n, \y)).
\end{aligned}
\label{eq-df}
\end{equation}
On the other hand, for any $s \in [t_{n-1}, t_n]$
\begin{equation}
\begin{aligned}
 \chi^n (s; t_n, \y) - \y &  =  \chi^n (s; t_n, \y) - \chi^n (t_n; t_n, \y),\\
 & = \frac {\partial \chi^n} {\partial t} (t_{n} + \theta_3 (s - t_n), t_n, \y) (s - t_n),
\\
& = \uu^n (\chi^n(t_{n} + \theta_3 (s - t_n), t_n, \y) ) (s - t_n),
\end{aligned}
\end{equation}
with $\theta_3 \in ]0, 1[$, \ then
\begin{equation}
|\chi^n (s; t_n, \y) - \y| \leq |s - t_n|\ \|\uu\|_{L^\infty(\Omega \times ]0, T[)}.
\label{ch-sm}
\end{equation}
Then one deduces from  \eqref{eq-df} and \eqref{ch-sm}:
\begin{equation}
\dfrac{\psit(r,\eta,\x_N(\y, t),t)-\psit(r,\eta,\y,t)}{\Delta t} \rightarrow - \uu(t, \y) \cdot
\nabla_{\y} \psit(r, \eta, \y, t),
\label{cv-df2}
\end{equation}
strongly in $L^2(0, T; L_\alpha^2)$.
Next, from \eqref{dif-fin}, \eqref{cv-in}, \eqref{cv-df1} and \eqref{cv-df2} one gets
%
%
\begin{equation}
\begin{aligned}
&\int_0^T\int_{\OQ} \dfrac{\psi_N^{+}(r,\eta,\y,t) -\psi_N^{-}(r,\eta,\z_N(\y,t),t)}{\Delta t}\ \psit(r,\eta,\y,t)\  d\q\dd \y d t\\
& \rightarrow -\int_0^T\int_{\OQ} \psi \left( \deriv{t}\psit+ \uu \cdot \nabla_\y \psit \right)\  d\q\dd \y d t - \int_{\OQ} \psi^0 \psit(t=0)\ d\q d \y.
\label{res1}
\end{aligned}
\end{equation}
Now, from the strong convergences
\begin{align}
&\nabla_{\y} \uu_N^+ \rightarrow \nabla_{\y} \uu,\\
&g(\nabla_{\y} \uu_N^+,\uu_N^+, \eta)  \rightarrow g(\gu,\uu, \eta),
\end{align}
and the fact that
\begin{equation}
\phi_N^- \rightarrow  \phi,
\end{equation}
one easily calculates the limit in \eqref{e-psn} and gets \eqref{vfpsi}.
Moreover,
\begin{equation}
\begin{split}
\int_0^T \int_{\Omega} & \dfrac{\phi_N^{+}(\y,t) -\phi_N^{-}(\y,t)}{\Delta t}\ \phit(\y,t)\ \dd \y d t\\
& \rightarrow  - \int_{\Omega} \psi^0 \psit(t=0)\ d \y - \int_0^T \int_{\Omega} \phi  \deriv{t}\phit\ \dd \y d t.
\end{split}
\end{equation}
Calculating the limit in \eqref{e-phn} easily leads to  \eqref{vfphi}.
\section{Conclusions}
Understanding polymer dynamics under different experimental conditions is of importance for the laboratory biologists. In this work we studied the influence of an external velocity field on the polymer-fibrils fragmentation (scission) and lengthening process. To the best of our knowledge this type of study has never been taken into account in the mathematical modelling of this problem. And even if our approach is at its early stage of development, we managed to obtain a rather good generalization of the existing models using more realistic assumptions
when adapted to the prion study.

 
In this work, we generalized the corresponding Fokker-Planck-Smoluchowski partial differential equation for rigid rods in order to account for the fragmentation/lengthening process adapted for prion proliferation. Moreover,
we have introduced a set of two equations on monomers and polymers with a known flow. We prove existence and positivity of weak solutions to the system with assumptions on the rates and distribution kernel. The proof is based on variational formulation, a semi-discretization in time, and we obtain estimations which allow us to pass to the limit. To achieve this, we introduced a suitable functional framework (see section \ref{functionalframework}).

The matter of existence of solutions to the full system (\textit{i.e.} considering the time dependence of monomers
together with the Navier-Stokes equations given in section \ref{themodel}) will be adressed in a future work.
%


\section*{Acknowledgments}
The authors gratefully acknowledge Dr. Jean-Pierre Liautard, directeur de recherche \`{a} l'\textsc{INSERM},  Universit\'e de Montpellier 2, France, for providing the image in figure \ref{fig1} and for useful talks on biology of prions. \\
This work was supported by ANR grant MADCOW no. 08-JCJC-0135-CSD5.
%
\section*{Appendix}
Let $M\in \mathcal{M}_3(\RR)$, $\eta\in\Sd$, we shall compute in spherical coordinates according to the base $(e_\theta, e_\varphi, e_r)$
\[\nabla_\eta \cdot \mathrm{P}_{\eta^\perp} M \eta = \nabla_\eta \cdot M\eta - \nabla_\eta \cdot ( M\eta\cdot \eta) \eta. \]
Note that in spherical coordinates, $\eta=e_r$ and for $F$ a vector value function,
\[\nabla_\eta\cdot F = \partial_\theta F_\theta + \dfrac{\cos\theta}{\sin\theta} F_\theta + \dfrac{1}{\sin\theta}\partial_\varphi F_\varphi + 2 F_r,\]
with $F_k = F \cdot e_k$, for $k = \theta, \varphi, r$. According to the derivative of the vector of the base, see Appendix II \cite{Otto2008} and the fact that
\[\partial_k M e_r\cdot e_j = M\partial_k e_r\cdot e_j + M e_r\cdot \partial_k e_j,\]
assumed that  $F = M e_r$, then
\[\nabla_\eta\cdot M e_r  = M e_\theta\cdot e_\theta + M e_\varphi\cdot e_\varphi.\]
Next, take $F = (Me_r\cdot e_r)e_r$, it is clear that
\[F_\theta =  (Me_r\cdot e_r)(e_r\cdot e_\theta) = 0,\;\;\text{and}\;\;F_\varphi = (Me_r\cdot e_r)(e_r\cdot e_\varphi) = 0,\]
thus
\[\nabla_\eta\cdot (Me_r\cdot e_r)e_r =  2 Me_r \cdot e_r.\]
Finally,
\[\nabla_\eta \cdot \mathrm{P}_{\eta^\perp} M \eta = M e_\theta\cdot e_\theta + M e_\varphi\cdot e_\varphi - 2 M e_r \cdot e_r.\]
%
%

\bibliographystyle{abbrv}
\bibliography{dcdsb}

\begin{thebibliography}{10}

\bibitem{Bird1987}
R.~Bird, R.~Armstrong, and O.~Hassager.
\newblock Dynamics of polymeric liquids, vol. 2: Kinetic theory.
\newblock {\em A Wiley-Interscience Publication, John Wiley \& Sons}, 1987.

\bibitem{Calvez2009}
V.~Calvez, N.~Lenuzza, D.~Oelz, J.~Deslys, P.~Laurent, F.~Mouthon, and
  B.~Perthame.
\newblock Size distribution dependence of prion aggregates infectivity.
\newblock {\em Mathematical Biosciences}, 217(1):88--99, 2009.

\bibitem{Caughey2009}
B.~Caughey, G.~Baron, B.~Chesebro, and M.~Jeffrey.
\newblock Getting a grip on prions: oligomers, amyloids and pathological
  membrane interactions.
\newblock {\em Annual review of biochemistry}, 78:177, 2009.

\bibitem{Doumic2009}
M.~Doumic, T.~Goudon, and T.~Lepoutre.
\newblock Scaling limit of a discrete prion dynamics model.
\newblock {\em Communications in Mathematical Sciences}, 7(4):839--865, 2009.

\bibitem{Engler2006}
H.~Engler, J.~Pr{\"u}ss, and G.~Webb.
\newblock Analysis of a model for the dynamics of prions ii.
\newblock {\em Journal of mathematical analysis and applications},
  324(1):98--117, 2006.

\bibitem{Greer2006}
M.~Greer, L.~Pujo-Menjouet, and G.~Webb.
\newblock A mathematical analysis of the dynamics of prion proliferation.
\newblock {\em Journal of theoretical biology}, 242(3):598--606, 2006.

\bibitem{Greer2007}
M.~Greer, P.~Van~den Driessche, L.~Wang, and G.~Webb.
\newblock Effects of general incidence and polymer joining on nucleated
  polymerization in a model of prion proliferation.
\newblock {\em SIAM Journal on Applied Mathematics}, 68:154, 2007.

\bibitem{Huilgol1997}
R.~Huilgol and N.~Phan-Thien.
\newblock {\em Fluid mechanics of viscoelasticity}.
\newblock Elsevier, 1997.

\bibitem{Kirkwood1968}
J.~G. Kirkwood.
\newblock {\em Macromolecules}.
\newblock Gordon and Breach, 1968.

\bibitem{Lansbury1995}
P.~Lansbury et~al.
\newblock The chemistry of scrapie infection: implications of the ``ice 9''
  metaphor.
\newblock {\em Chemistry \& biology}, 2(1):1--5, 1995.

\bibitem{Laurencot2007}
P.~Lauren{\c{c}}ot and C.~Walker.
\newblock Well-posedness for a model of prion proliferation dynamics.
\newblock {\em Journal of Evolution Equations}, 7(2):241--264, 2007.

\bibitem{Masel1999}
J.~Masel, V.~Jansen, and M.~Nowak.
\newblock Quantifying the kinetic parameters of prion replication.
\newblock {\em Biophysical chemistry}, 77(2-3):139--152, 1999.

\bibitem{Otto2008}
F.~Otto and A.~Tzavaras.
\newblock Continuity of velocity gradients in suspensions of rod--like
  molecules.
\newblock {\em Communications in Mathematical Physics}, 277(3):729--758, 2008.

\bibitem{Prusiner1998}
S.~Prusiner.
\newblock Prions.
\newblock {\em Proceedings of the National Academy of Sciences},
  95(23):13363--13383, 1998.

\bibitem{Pruss2006}
J.~Pr{\"u}ss, L.~Pujo-Menjouet, G.~Webb, and R.~Zacher.
\newblock Analysis of a model for the dynamics of prions.
\newblock {\em Discrete Contin. Dyn. Syst. Ser. B}, 6(1):225--235, 2006.

\bibitem{Scheibel2001}
T.~Scheibel, A.~Kowal, J.~Bloom, and S.~Lindquist.
\newblock Bidirectional amyloid fiber growth for a yeast prion determinant.
\newblock {\em Current Biology}, 11(5):366--369, 2001.

\bibitem{Simonett2006}
G.~Simonett and C.~Walker.
\newblock On the solvability of a mathematical model for prion proliferation.
\newblock {\em Journal of mathematical analysis and applications},
  324(1):580--603, 2006.

\bibitem{Walker2007}
C.~Walker.
\newblock Prion proliferation with unbounded polymerization rates.
\newblock In E.~J. of~Differential~Equations, editor, {\em Proceedings of the
  Sixth Mississippi State Conference on Differential Equations and
  Computational Simulations}, Conference 15, pages 387--397, 2007.

\bibitem{Zomosa-Signoret2007}
V.~Zomosa-Signoret, J.~Arnaud, P.~Fontes, M.~Alvarez-Martinez, and J.~Liautard.
\newblock Physiological role of the cellular prion protein.
\newblock {\em Veterinary research}, 39(4):9, 2007.

\end{thebibliography}

\end{document}